\journal{}
\newtheorem{theo}{Theorem}
\newtheorem{lemma}[theo]{Lemma}
\newtheorem{problem}{Problem}
\newtheorem{remark}{Remark}
\def\wbox#1;#2;{\vbox{\hrule\hbox{\vrule height#1mm\kern#2mm\vrule
  height#1mm}\hrule}}
\def\Dim{\noindent {\it Proof.}\hskip.2truecm}
\def\fineDim{\nobreak\hfill\wbox2.3;2.3;\linebreak}
\newenvironment{proof}{\Dim}{\fineDim}
\begin{document}

\maketitle

\begin{frontmatter}

  \title{Error estimates in weighted Sobolev norms for \\
    finite element immersed interface methods}

  \author[sissa]{Luca Heltai} 
  \ead{luca.heltai@sissa.it}
  \author[wias]{Nella Rotundo\corref{cor}}
  \ead{nella.rotundo@wias-berlin.de}
  \cortext[cor]{Corresponding author. Tel: +49 30 20372-398.}
\address[sissa]{SISSA-International School for Advanced Studies\\
  via Bonomea 265, 34136 Trieste - Italy}
\address[wias]{Weierstrass Institute for Applied Analysis and Stochastics\\
  Mohrenstra\ss e 39, 10117 Berlin - Germany}

\begin{abstract} 

  When solving elliptic partial differential equations in a region containing immersed interfaces (possibly evolving in time), it is often desirable to approximate the problem using an independent background discretisation, not aligned with the interface itself. Optimal convergence rates are possible if the discretisation scheme is enriched by allowing the discrete solution to have jumps aligned with the surface, at the cost of a higher complexity in the implementation.

  A much simpler way to reformulate immersed interface problems consists in replacing the interface by a singular force field that produces the desired interface conditions, as done in immersed boundary methods. These methods are known to have inferior convergence properties, depending on the global regularity of the solution across the interface, when compared to enriched methods.

  In this work we prove that this detrimental effect on the convergence properties of the approximate solution is only a local phenomenon, restricted to a small neighbourhood of the interface. In particular we show that optimal approximations can be constructed in a natural and inexpensive way, simply by reformulating the problem in a distributionally consistent way, and by resorting to weighted norms when computing the global error of the approximation. 

\end{abstract}

\begin{keyword}
  Finite Element Method \sep Immersed Interface Method
  \sep Immersed Boundary Method \sep Weighted Sobolev Spaces 
  \sep Error Estimates
\end{keyword}

\end{frontmatter}




\pagebreak

\section{Introduction}
\label{sec:intro}

Interface problems are ubiquitous in nature, and they often involve changes in topology or complex coupling across the interface itself. Such problems are typically governed by elliptic partial differential equations (PDEs) defined on separate domains and coupled together with interface conditions in the form of jumps in the solution and flux across the interface. 

It is a general opinion that reliable numerical solutions to interface problems can be obtained using body fitted meshes (possibly evolving in time), as in the Arbitrary Lagrangian Eulerian (ALE) framework~\cite{Hirt1974,DoneaGiulianiHalleux-1982-a}. However, in the presence of topological changes, large domain deformations, or freely moving interfaces, these methods may require re-meshing, or even be impractical to use.

Several alternative approaches exist that reformulate the problem using a fixed background mesh, removing the requirement that the position of the interface be aligned with the mesh. These methods originate from the Immersed Boundary Method (IBM), originally introduced by Peskin in~\cite{Peskin1972}, to study the blood flow around heart valves (see also~\cite{Peskin2002}, or the review~\cite{Mittal2005b}), and evolved into a large variety of methods and algorithms. 

We distinguish between two main different families of immersed methods. In the first family  the interface conditions are incorporated into the finite difference scheme, by modifying the differential operators,  or in the finite element space by enriching locally the basis functions to allow for the correct jump conditions in the gradients or in the solution. The second family leaves the discretisation intact, and reformulates the jump conditions using singular source terms.

Important examples of the first family of methods are given by the Immersed Interface Method (IIM)~\cite{Leveque1994} and its finite element variant~\cite{Li1998}, or the eXtended Finite Element Method (X-FEM)~\cite{Sukumar2000}, that exploits partition of unity principles~\cite{Melenk1996} (see also~\cite{HansboHansbo-2002-a, HansboHansbo-2004-a}). For a comparison between IIM and X-FEM see, for example,~\cite{Vaughan2006}, while for some details on the finite element formulation of the IIM see~\cite{Li2003a, Gong2008, Hou2013, Mu2013}.

The original Immersed Boundary Method~\cite{Peskin1972} and its variants belong to the second category. Singular source terms are formally written in terms of the Dirac delta distribution, and their discretisation follow two possible routes: i) the Dirac delta distribution is approximated through a smooth function, or ii) the variational definition of the Dirac distribution is used directly in the Finite Element formulation of the problem. For finite differences, the first solution is the only viable option, even though the use of smooth kernels may excessively smear the singularities, leading to large errors in the approximation~\cite{Hosseini2014}. In finite elements, instead, both solutions are possible. The methods derived from the Immersed Finite Element Method (IFEM) still use approximations of the Dirac delta distribution through the Reproducing Kernel Particle Method (RKPM)~\cite{ZhangGerstenberger-2004-Immersed-finite-0}. 

Variational formulations of the IBM were introduced in~\cite{BoffiGastaldi-2003-a, BoffiGastaldi-2007-Numerical-stability-0, BoffiGastaldiHeltaiPeskin-2008-a, Heltai-2008-a}, and later generalised in~\cite{Heltai2012b}, where the need to approximate Dirac delta distributions is removed by exploiting directly the weak formulation. Such formulations allow the solution of PDEs with jumps in the gradients without enriching the finite element space, and without introducing approximations of the Dirac delta distribution.

When IBM-like formulations are used to approximate interface problems, a natural deterioration is observed in the convergence of the approximate solution, which is no longer globally smooth, and cannot be expected to converge optimally to the exact solution (see the results section of~\cite{BoffiGastaldiHeltaiPeskin-2008-a}, or~\cite{Ramiere2008}). A formal optimal convergence can be observed in special cases~\cite{Lai2000}, but in general the global convergence properties of these methods is worse when compared to methods where the interface is captured accurately, either by local enrichment of the finite dimensional space, as in the X-FEM or IIM, or by using interface-fitted meshes, as in the ALE method.

In this work we show that this is only partially true, and that \emph{optimal} approximations can be constructed also when non-body fitted meshes are used, and when no explicit treatment of the jump conditions are imposed in the solution. This can be achieved in a natural and inexpensive way, simply by reformulating the problem in a distributionally consistent way, and by resorting to weighted norms when computing the global error of the approximation. We show here that the deterioration of the error is a purely local phenomena, restricted to a small neighbourhood of the interface itself. In particular we prove that by using suitable powers of the distance function from the interface  as weights in  weighted Sobolev norms when computing the errors, optimal error estimates can be attained globally.

Weighted Sobolev spaces~\cite{Kufner1985,Turesson2000}, provide a natural framework for the study of the convergence properties of problems with singular sources (see, for example,~\cite{Agnelli2014}).  These spaces are commonly used in studying problems with singularities in the domain (for example in axisymmetric domains~\cite{Belhachmi2006}, or in domains with external cusps~\cite{Duran2009a}) and when the singularities are caused by degenerate or singular behavior of the coefficients of the differential operator~\cite{Fabes1982a, Caffarelli2007, Cabre2015}. A particularly useful class of weighted Sobolev spaces is given by those spaces whose weights belong to the so-called Muckenhoupt class $A_p$~\cite{Muckenhoupt1972}. An extensive approximation theory for weighted Sobolev spaces is presented in~\cite{Nochetto2016}.

The ideas we present in this work are inspired by the works of
D'Angelo and Quarteroni~\cite{DAngelo2008} and
D'Angelo~\cite{Dangelo2012}, where the authors discuss the coupling
between one dimensional source terms and three dimensional
diffusion-reaction equations~\cite{DAngelo2008}, and finite element
approximations of elliptic problem with Dirac
measures~\cite{Dangelo2012}.

A general setting for the numerical approximation of elliptic problems
with singular sources is available in \cite{Drelichman2018}
and~\cite{Otarola2018}.

By applying the same principles, we recover optimal error estimates in
the approximation of problems with singular sources distributed along
co-dimension one surfaces, such as those arising in the variational
formulation of immersed methods.

In Section~\ref{sec:immers-bound-meth} and~\ref{sec:weight-sobol-spac}
we outline the problem we wish to solve, and introduce weighted
Sobolev spaces. Section~\ref{sec:numer-appr} is dedicated to the
definition of the numerical approximation, and to the proof of the
optimal convergence rates in weighted Sobolev norms.
Section~\ref{sec:validation} presents a numerical validation using
both two- and three-dimensional examples, while
Section~\ref{sec:conclusions} provides some conclusions and
perspectives.

\section{Model interface problem}
\label{sec:immers-bound-meth}

When approximating problems with interfaces using non-matching grids, one can choose among several possibilities. For example, one could decide to discretise the differential operators using finite differences, or to pose the problem in a finite dimensional space and leave the differential operators untouched,  as in the finite element case.  

In both cases, if one wants to enforce strongly the interface conditions, it is necessary to modify the differential operators (as in the IIM method~\cite{Leveque1994} for finite differences) or to enrich the finite dimensional space (as in the X-FEM~\cite{Sukumar2000}). A third option, that we will call \emph{distributional} approach, consists in leaving the space and the differential operators untouched, and to rewrite the jump conditions in terms of singular sources, as in the original Immersed Boundary Method~\cite{Peskin2002}.

The \emph{distributional} approach can be tackled numerically either by mollification of Dirac delta distributions, or by applying variational formulations, where the action of the Dirac distributions is applied using its definition to the finite element test functions. 

\begin{figure}[h]
\tikzsetnextfilename{gamma-only}
\centering
\begin{tikzpicture}

  \node at (1,1) (G){};
  \node at (2,4) (N){};
  \node at (1.5,4.5) (Np){};
  
  \draw [red] plot [smooth cycle, tension=1]
  coordinates { (G) (3,1)  (4,2)  (3,3) (N)};

  \node [below left] at (G) {$\Gamma$};
  \node [above] at (N) {$\nu$};
  
  \draw [-stealth] (N.south east) -- (Np);
  \fill (N) circle (1pt);

\end{tikzpicture}
\caption{An immersed interface in $\Re^n$}
\label{fig:interface}
\end{figure}

To fix the ideas, consider a Lipschitz, closed, interface $\Gamma \subset \Re^n$ of co-dimension one (as in Figure~\ref{fig:interface}), and the following model problem:

\begin{problem}[Interface only]
\label{pb:interface-only}
Given a function $f\in H^{-s}(\Gamma)$, $s\in \left(0,\frac12\right]$, find  a harmonic function $p$ in $\Re^n\setminus\Gamma$, such that:
\begin{equation}
\label{eq:auxiliary_p}
\begin{aligned}
& -\Delta p = 0 && \text{ in } \Re^n \setminus \Gamma, \\
& \jump{{p_\nu}} = f && \text{ on } \Gamma,\\
& \jump{p} = 0 && \text{ on } \Gamma.
\end{aligned}
\end{equation}
\end{problem}

The notation $\jump{\cdot}$ is used to indicate the jump across the interface $\Gamma$, and $p_\nu$ indicates the normal derivative of $p$, i.e., $\nu\cdot\nabla p$. The direction $\nu$ on $\Gamma$ is used to define precisely the meaning of $\jump{a}$ for any quantity $a$, i.e.:
\begin{equation}
  \label{eq:jump-definition}
  \jump{a} := a^{+} - a^{-},
\end{equation}
where $a^{+}$ lies on the same side of $\nu$.

Problem~\ref{pb:interface-only} admits a solution that can be constructed explicitly in terms of the  \emph{boundary integral representation} for harmonic functions (see, e.g.,~\cite{HsiaoWendland-2008-a}):
\begin{equation}
\label{eq:boundary_integral_representation_p}
  p(x) =  \int_\Gamma G(x-y) ~ f(y) \d \Gamma_y \quad \forall x \in \Re^n\setminus \Gamma,
\end{equation}
where $G$ is the fundamental solution of the Poisson problem in $\Re^n$:
\begin{equation}
\label{eq:green}
G(r) := 
\begin{cases}
-\frac{1}{2\pi} \log|r| & \text{ when }\, n = 2,
\\
\frac{1}{4\pi|r|}
& \text{ when }\, n = 3.
\end{cases}
\end{equation}

The function $G$ satisfies, in the distributional sense,
\begin{equation}
  \label{eq:laplacian-G}
  -\Delta_x G(x-y) = \delta(x-y), \qquad \forall x \text{ in }\Re^n\setminus\{y\}, 
\end{equation}
where $\delta$ is the $n$-dimensional \emph{Dirac delta distribution}, i.e., the distribution such that
\begin{equation}
  \label{eq:definition-dirac-delta}
  \int_{\Re^n} \delta(x-y) v(x) \d x := v(y), \qquad \forall v \in \D(\Re^n), \forall y \in \Re^n,
\end{equation}
and $\D(\Re^n)$ is the space of infinitely differentiable
functions, with compact support on $\Re^n$.

If $f$ is at least $H^{-\frac12}(\Gamma)$, then $p$ is globally in $H^1_{\text{loc}}(\Re^n)$, it is harmonic in the entire $\Re^n\setminus\Gamma$ (see, e.g.,~\cite{HsiaoWendland-2008-a} for a proof), and we can take its Laplacian in the entire $\Re^n$ in the sense of distributions.

Exploiting the boundary integral
representation~\eqref{eq:boundary_integral_representation_p}, the distributional laplacian of $p$ 
\begin{equation}
\label{eq:laplacian of bie p}
\begin{split}
-\Delta_x p(x) = & - \Delta_x \int_\Gamma G(x-y) ~ f(y) \d \Gamma_y \\
= & \int_\Gamma (-\Delta_x G(x-y) )  ~ f(y) \d \Gamma_y \\
= & \int_\Gamma \delta(x-y)   ~ f(y) \d \Gamma_y 
\quad \forall x \in \Re^n\setminus \Gamma,
\end{split}
\end{equation}
can be formally expressed in terms of a distributional operator $\LD$ as 
\begin{equation}
  \label{eq:definition M}
  (  \LD   f)(x) := \int_\Gamma \delta(x-y) ~ f(y)\d \Gamma_y.
\end{equation}

Notice that in the definition of the operator $\LD f$, the Dirac delta distribution $\delta$ is  defined through its action on functions in $\D(\Re^n)$, by the distributional definition (\ref{eq:definition-dirac-delta}). In $\LD f$, the Dirac distribution is convoluted with $f$ on a domain $\Gamma$ of co-dimension one with respect to $\Re^n$. The resulting distribution is zero everywhere, and singular \emph{across} $\Gamma$. 

This is usual in the Immersed Boundary and Immersed Interface literature, and should be interpreted as the distributional operator whose effect is to take the trace of the test function on $\Gamma$, and apply the duality product on $\Gamma$ with the function $f$:
\begin{equation}
  \label{eq:distributional-equivalence-of-dirac-on-Gamma}
  \langle \LD f, \varphi \rangle = \int_{\Re^n} \varphi(x)   \int_\Gamma \delta(x-y)   ~ f(y) \d
  \Gamma_y \d x := \int_\Gamma \varphi(y)  ~ f(y)  \d
  \Gamma_y  \qquad \forall \varphi \in \D(\Re^n).
\end{equation}

Similarly, let us consider a simply connected and convex domain  $\Omega$ of $\Re^n$, $n=2,3$, with Lipschitz boundary $\partial \Omega$, separated into $\Omega^+$ and $\Omega^-$ by  the surface $\Gamma$, as in Figure~\ref{fig:domain}, where we assume that  $\Gamma \cap \partial\Omega = \emptyset$.

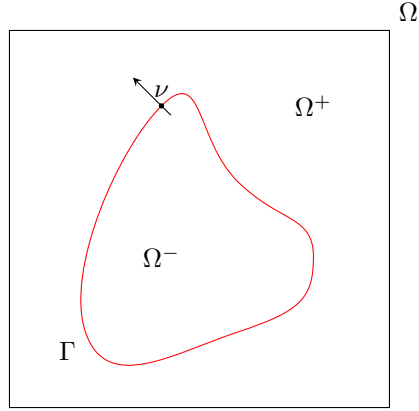
\begin{figure}[h]
\tikzsetnextfilename{domain}
\centering
\begin{tikzpicture}

  \draw (0,0) rectangle (5,5) node[anchor=south west](O) {$\Omega$};
  
  \node at (1,1) (G){};
  \node at (2,4) (N){};
  \node at (1.5,4.5) (Np){};
  
  \draw [red] plot [smooth cycle, tension=1]
  coordinates { (G) (3,1)  (4,2)  (3,3) (N)};

  \node at (4,4) {$\Omega^+$};
  \node at (2,2) {$\Omega^-$};
  \node [below left] at (G) {$\Gamma$};
  \node [above] at (N) {$\nu$};
  
  \draw [-stealth] (N.south east) -- (Np);
  \fill (N) circle (1pt);

\end{tikzpicture}
\caption{Domain representation}
\label{fig:domain}
\end{figure}

We use standard notations for Sobolev spaces  (see, for example,~\cite{Adams2003}), i.e.,  $H^s(A) = W^{s,2}(A)$, for real $s$, where $L^2(A) = H^0(A)$ and $H^1_0(A)$ represents square integrable functions on $A$ with square integrable first derivatives and whose trace is zero on the boundary $\partial A$ of the domain of definition $A$. 

\begin{problem}[Strong]
\label{pb:continuous}
Given $b\in L^2(\Omega)$ and $f\in H^{-s}(\Gamma)$, $s\in\left(0,\frac12\right]$, find a solution $u$ of the problem

\begin{equation}
\label{eq:problem_u}
\begin{aligned}
& -\Delta u = b && \text{ in } \Omega \setminus \Gamma, \\
& \jump{{u_\nu}} = f && \text{ on } \Gamma, \\
& \jump{{u}} = 0 && \text{ on } \Gamma, \\
& u = 0 && \text{ on } \partial\Omega.
\end{aligned}
\end{equation}
\end{problem}

Using the solution $p$ to Problem~\ref{pb:interface-only}, a solution to Problem~\ref{pb:continuous} can be written as $u=z+p$ where $z$ satisfies
\begin{equation}
\label{eq:auxiliary_z}
\begin{aligned}
& -\Delta z = b && \text{ in } \Omega, \\
& z = -p && \text{ on } \partial\Omega.
\end{aligned}
\end{equation}

Since $\Gamma \cap \partial\Omega = \{\emptyset\}$, and away from
$\Gamma$ the solution $p$ is harmonic, its restriction on
$\partial \Omega$ is at least Lipschitz and continous, and we conclude
that problem~(\ref{eq:auxiliary_z}) admits a unique solution which is
at least $H^2(\Omega)$, thanks to the assumption that $\Omega$ is
convex. The solution $u = z+p$ to Problem~\ref{pb:continuous} is
globally $H^1(\Omega)$, and at least $H^2(\Omega \setminus \Gamma)$.

Problem~\ref{pb:continuous} is equivalent to the following distributional formulation, where the jump conditions are incorporated into a singular right hand side.

\begin{problem}[Distributional]
\label{pb:distributional}
Given $b\in L^2(\Omega)$, $f\in H^{-s}(\Gamma)$, $s\in\left(0,\frac12\right]$, find the distribution $u$ such that
\begin{equation}
\label{eq:problem_dirac}
\begin{aligned}
  & -\Delta u = b   +   \LD  f && \text{ in } \Omega, \\
  & (  \LD   f)(x) := \phantom{-}\int_\Gamma \delta(x-y) ~ f(y)\d \Gamma_y,\\
  & u = 0 && \text{ on } \partial\Omega.
\end{aligned}
\end{equation}
\end{problem}

The function $u = z+p$ is then a solution to Problem~\ref{pb:continuous} in $\Omega\setminus\Gamma$, and to Problem~\ref{pb:distributional}, in the distributional sense, in the entire domain $\Omega$. The distributional definition of $ \LD f$ is derived in equations~(\ref{eq:green}--\ref{eq:laplacian of bie p}), and it is given by \begin{equation} \label{eq:distribution-S-D-applied-on-v} \begin{aligned} \langle \LD f, \varphi \rangle := & \phantom{-} \int_\Gamma f ~ \varphi \d \Gamma && \forall \varphi \in \D(\Omega),  \end{aligned} \end{equation}
where we exploit that $\D(\Omega) \subset  \D(\Re^n)$.

This formulation is at the base of the Immersed Boundary
Method~\cite{Peskin2002}. In the IBM, a problem similar to
Problem~\ref{pb:distributional} is discretised by Finite Differences,
and the Dirac delta distribution is replaced by a regularised delta
function, used to interpolate between non-matching sample points on
the co-dimension one surface $\Gamma$ and on the domain $\Omega$. 

When regularised Dirac distributions are used, a natural deterioration is observed in the convergence of the approximate solution. A formal second order convergence can be observed in special cases~\cite{Lai2000}, but in general the global convergence properties of these methods is worse when compared to methods where the interface is taken into account explicitly, like in the IIM~\cite{Leveque1994}. Moreover, the regularisation itself introduces an additional source of approximation, which smears out the singularity, and may deteriorate further the convergence properties of the method~\cite{Hosseini2014}.

\section{Variational formulation}
\label{sec:vari-form}

In order to study the regularity and the well posedeness of
Problem~\ref{pb:distributional}, we begin by showing that $\LD f$
belongs to $H^{-1}(\Omega)$, and therefore there exists a solution $u$
to the distributional Problem~\ref{pb:distributional} which is globally in
$H^1_0(\Omega)$.

We begin by recalling standard results for trace operators. For a bounded domain $\Omega^{+}$ (or $\Omega^{-}$) with (part of the) boundary $\Gamma$, given a function $u \in C^0 (\overline{\Omega^+} )$ (or $u \in C^0(\overline{\Omega^-})$), it makes sense to define the restriction of $u$ on $\Gamma$, simply by considering its pointwise restriction. For Sobolev spaces, we recall the following classical result (see, e.g.,~\cite[Theorem 3.38]{mclean2000strongly}, or \cite{ciarlet78}):

\begin{theo}[Trace theorem] 
\label{theo:trace}
Let $\Gamma$ be  a Lipschitz closed co-dimension one surface, splitting $\Omega$ into 
$\Omega^+$ and $\Omega^-$. 
For $0 < s < 1$ the interior and the exterior trace operators 
\[
\begin{split}
\gamma^{\mathrm{int}}:& H^{s+\frac12}({\Omega}^-)\to H^{s}(\Gamma),\\
\gamma^{\mathrm{ext}}:& H^{s+\frac12}({\Omega}^+)\to H^{s}(\Gamma),
\end{split}
\]
are bounded, linear, and injective mappings, that posses bounded right inverses. If the function $v$ is globally $H^{s+\frac12}(\Omega)$, then $\gamma^{\mathrm{int}} v  =  \gamma^{\mathrm{ext}} v$, and we omit the symbol altogether,  simply indicating with $v$ both the function in $H^{s+\frac12}(\Omega)$ and its restriction to $H^{s+\frac12}(\Gamma)$. 
There exists a constant $C_T>0$ such that   
\begin{equation}
\label{eq:trace_theorem}
   \| v \|_{s ,\Gamma} \leq C_T  \| v \|_{s+\frac12, \Omega}, 
\qquad \forall  v \in H^{s}(\Omega), 
   \qquad 0 < s < 1.
\end{equation}
\end{theo}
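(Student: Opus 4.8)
The statement is the classical trace theorem for a Lipschitz hypersurface, so the plan is to reduce it to the flat model problem and transfer the estimate back by a partition of unity. Since $\Gamma$ is closed (hence compact) and Lipschitz, I would first cover it by finitely many open sets $U_i$ in each of which, after a rotation, $\Gamma\cap U_i$ is the graph $x_n=\phi_i(x')$ of a Lipschitz function. The associated bi-Lipschitz map $\Psi_i$ flattens $\Gamma\cap U_i$ onto a piece of the hyperplane $\{x_n=0\}$ and sends $\Omega^-\cap U_i$ into the half-space $\{x_n<0\}$ (and $\Omega^+\cap U_i$ into $\{x_n>0\}$, which gives $\gamma^{\mathrm{ext}}$ by the same argument). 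Choosing a smooth partition of unity $\{\chi_i\}$ subordinate to the $U_i$, and using that multiplication by $\chi_i$ is bounded on every $H^t$, it suffices to estimate the trace of each flattened piece $(\chi_i v)\circ\Psi_i^{-1}$ on $\{x_n=0\}$.

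For the flat problem I would extend each piece to a function $w\in H^{s+\frac12}(\Re^n)$ of comparable norm and estimate the trace on the hyperplane directly in Fourier variables. Writing $\langle\xi\rangle:=(1+|\xi|^2)^{\frac12}$ and recovering the trace by inverting the transform in the normal frequency, $\widehat{w(\cdot,0)}(\xi')=\frac{1}{2\pi}\int_{\Re}\hat w(\xi',\xi_n)\,\d\xi_n$, the Cauchy--Schwarz inequality gives $\langle\xi'\rangle^{2s}\,|\widehat{w(\cdot,0)}(\xi')|^2\le\big(\langle\xi'\rangle^{2s}\int_{\Re}\langle\xi\rangle^{-(2s+1)}\d\xi_n\big)\int_{\Re}\langle\xi\rangle^{2s+1}|\hat w|^2\,\d\xi_n$. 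The scaling $\xi_n=\langle\xi'\rangle t$ shows $\langle\xi'\rangle^{2s}\int_{\Re}\langle\xi\rangle^{-(2s+1)}\d\xi_n=C_s<\infty$ precisely because $2s+1>1$, i.e.\ $s>0$; integrating in $\xi'$ and using Plancherel yields $\|w(\cdot,0)\|_{s,\Re^{n-1}}\le C\|w\|_{s+\frac12,\Re^n}$. The upper bound $s<1$ enters only through the change of variables, discussed below. A density argument from $\D(\overline{\Re^n_+})$ extends the estimate to all of $H^{s+\frac12}$.

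The bounded right inverse is obtained in the same coordinates by an explicit construction: for $g\in H^s(\Re^{n-1})$ I would set $\widehat{Eg}(\xi',\xi_n):=\hat g(\xi')\,\langle\xi'\rangle^{-1}\psi(\xi_n/\langle\xi'\rangle)$ with a fixed $\psi\in\D(\Re)$ normalised so that $\frac{1}{2\pi}\int_{\Re}\psi=1$, and check by the same scaling that $E$ maps $H^s(\Re^{n-1})$ boundedly into $H^{s+\frac12}(\Re^n_+)$ with $\gamma E=\mathrm{id}$; reassembling the local inverses with the $\chi_i$ produces the global one, which in particular makes $\gamma^{\mathrm{int}}$ and $\gamma^{\mathrm{ext}}$ surjective. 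The identity $\gamma^{\mathrm{int}}v=\gamma^{\mathrm{ext}}v$ for a globally $H^{s+\frac12}(\Omega)$ function follows by density: it is the pointwise restriction, hence trivially two-sided, for $v\in C^\infty(\overline\Omega)$, and it passes to the limit since both one-sided trace maps are continuous and $C^\infty(\overline\Omega)$ is dense in $H^{s+\frac12}(\Omega)$.

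The main obstacle is the interaction between the mere Lipschitz regularity of $\Gamma$ and the order $s+\frac12\in(\frac12,\frac32)$. A bi-Lipschitz change of variables preserves $H^t$ (up to equivalent norms) only for $|t|\le1$, because transporting derivatives of order higher than one would require second derivatives of $\Psi_i$, which a Lipschitz chart does not possess. Thus for the subrange $s\in(\frac12,1)$, where $s+\frac12>1$, the flattening step cannot be applied naively and must be handled either by interpolating between the endpoint estimates, or by working with first derivatives, which lie in $H^{s-\frac12}$ with $s-\frac12\in(0,\frac12)$, a range stable under Lipschitz maps. It is exactly this threshold that forces the hypothesis $0<s<1$ and makes the theorem compatible with a genuinely Lipschitz interface; for the range $s\in(0,\frac12]$ actually used in the paper one has $s+\frac12\le1$ and the change of variables is unproblematic, so the only real work is the half-space computation and the partition-of-unity bookkeeping.
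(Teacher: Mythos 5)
The paper offers no proof of this statement at all: it is quoted as a classical result with pointers to McLean~\cite[Theorem 3.38]{mclean2000strongly} and~\cite{ciarlet78}, so there is no internal argument to compare against, and what you have written is precisely the standard proof of the cited result. Your computations check out: the localization by a finite cover and partition of unity, the half-space estimate via Cauchy--Schwarz in the normal frequency (the constant $\langle\xi'\rangle^{2s}\int_{\Re}\langle\xi\rangle^{-(2s+1)}d\xi_n = C_s<\infty$ exactly when $s>0$), the Fourier-multiplier extension $\widehat{Eg}(\xi',\xi_n)=\hat g(\xi')\langle\xi'\rangle^{-1}\psi(\xi_n/\langle\xi'\rangle)$ with the correct normalisation and scaling, and the density argument for $\gamma^{\mathrm{int}}v=\gamma^{\mathrm{ext}}v$ are all sound. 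You are also right that for the range the paper actually uses, $s\in\left(0,\frac12\right]$, one has $s+\frac12\le 1$ and the bi-Lipschitz flattening is harmless, so on that subrange your sketch is essentially complete.

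For the remaining range $s\in\left(\frac12,1\right)$, however, both repairs you gesture at have a genuine gap. Interpolation ``between the endpoint estimates'' cannot work as stated: the upper endpoint $H^{\frac32}(\Omega^{-})\to H^{1}(\Gamma)$ is \emph{false} for general Lipschitz $\Gamma$ (the range $0<s<1$ is sharp for Lipschitz interfaces), so there is no valid pair of estimates bracketing $\left(\frac12,1\right)$ from which to interpolate; at best you can interpolate inside a range you already possess. The alternative ``work with first derivatives'' also does not close immediately, since $\nabla v\in H^{s-\frac12}(\Omega^{-})$ with $s-\frac12<\frac12$ has no trace of its own. The genuine fix --- this is what Costabel's and McLean's proofs do, and why citing them is the economical choice --- avoids flattening the volume function altogether: since $H^{s}(\Gamma)$ for $s<1$ is \emph{defined} through the Lipschitz graph parametrisation, one estimates the Slobodeckij seminorm of $x'\mapsto v(x',\phi(x'))$ directly in graph coordinates, controlling it by integrating $v$ and $\nabla v$ along vertical segments, so that no more than one derivative is ever transported through the merely Lipschitz chart. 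Two further slips are in the statement rather than in your proof: the trace operators are of course not injective (their kernels contain all functions vanishing near $\Gamma$); what your right-inverse construction yields, and what the cited theorem asserts, is surjectivity, which is what the paper needs. Likewise, the quantifier in~\eqref{eq:trace_theorem} should read $v\in H^{s+\frac12}(\Omega)$, which is the version you in fact prove.
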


\begin{theo}[Regularity of $\LD$]
  \label{theo:regularity M}
  For $\Gamma$ Lipschitz and $s\in \left(0,\frac12\right]$, 
   the operator $\LD : H^{-s}(\Gamma) \to H^{-s-\frac12}(\Omega)$ defined as
  \begin{equation*}
    {}_{H^{-s-\frac12}(\Omega)}\langle\LD f, v\rangle_{H^{s+\frac12}_0(\Omega)} := \int_\Gamma f v \d \Gamma \qquad \forall v \in H_0^{s+\frac12}(\Omega), \qquad s\in\left(0,\frac12\right],
  \end{equation*}
  is bounded, 
  i.e.,  there exists a constant $M$ such that  $\forall f \in H^{-s}(\Gamma)$, $\forall v \in  H^{s+\frac12}_0(\Omega)$:   \begin{equation}
    \label{eq:bound-on-M}
    {}_{H^{-s-\frac12}(\Omega)}\langle\LD f, v\rangle_{H^{s+\frac12}_0(\Omega)} := \int_\Gamma f v \d \Gamma \leq  
    M \| f \|_{-s, \Gamma} \| v \|_{s+\frac12,\Omega}.
  \end{equation}
\end{theo}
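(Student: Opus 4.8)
The plan is to recognise that the claimed bound is nothing more than the composition of the trace theorem with the definition of the dual norm on $\Gamma$, so that the whole statement reduces to a short continuity estimate once the notation is interpreted correctly. The first point to settle is the meaning of the ``integral'' $\int_\Gamma f v \d \Gamma$: since $f$ is only a distribution in $H^{-s}(\Gamma)$ and not a genuine function, this symbol must be read as the duality pairing ${}_{H^{-s}(\Gamma)}\langle f, \gamma v\rangle_{H^{s}(\Gamma)}$ between $f$ and the trace $\gamma v$ of $v$ on $\Gamma$, with $L^2(\Gamma)$ playing the role of pivot space. For $v \in \D(\Omega)$ the pairing coincides with the literal surface integral, consistently with~\eqref{eq:distributional-equivalence-of-dirac-on-Gamma}, and since $\D(\Omega)$ is dense in $H^{s+\frac12}_0(\Omega)$ it suffices to prove the estimate for smooth $v$ and then extend $\LD f$ by continuity to the whole space.

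With this reading, I would carry out the estimate in two steps. First, by the very definition of the norm on the dual space $H^{-s}(\Gamma)$, the duality pairing is controlled by the product of the norms of its two arguments,
\begin{equation*}
  \left| {}_{H^{-s}(\Gamma)}\langle f, \gamma v\rangle_{H^{s}(\Gamma)} \right|
  \le \| f \|_{-s,\Gamma}\, \| \gamma v \|_{s,\Gamma}.
\end{equation*}
Second, I would bound the trace term by applying Theorem~\ref{theo:trace}: for $0<s<1$ the trace operator maps $H^{s+\frac12}(\Omega)$ boundedly into $H^s(\Gamma)$, and in the notation of the theorem $\| \gamma v \|_{s,\Gamma} = \| v \|_{s,\Gamma} \le C_T \| v \|_{s+\frac12,\Omega}$. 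Chaining the two inequalities yields
\begin{equation*}
  \left| {}_{H^{-s-\frac12}(\Omega)}\langle \LD f, v\rangle_{H^{s+\frac12}_0(\Omega)} \right|
  \le C_T \, \| f \|_{-s,\Gamma}\, \| v \|_{s+\frac12,\Omega},
\end{equation*}
which is exactly~\eqref{eq:bound-on-M} with $M = C_T$, and in particular shows that $\LD f \in H^{-s-\frac12}(\Omega)$.

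The argument is essentially bookkeeping, and the only place requiring a little care is checking that Theorem~\ref{theo:trace} is applicable over the entire admissible range $s \in \left(0,\frac12\right]$. The interior of that range poses no issue, since $\left(0,\frac12\right) \subset (0,1)$; the delicate point is the endpoint $s = \frac12$, where $s+\frac12 = 1$ and $v$ ranges over $H^1_0(\Omega)$ with trace in $H^{\frac12}(\Gamma)$. Since the hypotheses of Theorem~\ref{theo:trace} are stated for all $0<s<1$, this case is covered as well and needs no separate treatment. I therefore expect the main (and only mild) obstacle to be making the duality interpretation of the surface integral rigorous and justifying the density extension from $\D(\Omega)$, rather than any genuine analytic difficulty.
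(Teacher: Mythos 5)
Your proof is correct and follows essentially the same route as the paper's own argument: interpret the surface integral as the $H^{-s}(\Gamma)$--$H^{s}(\Gamma)$ duality pairing, bound it by $\| f \|_{-s,\Gamma}\,\| v \|_{s,\Gamma}$, apply the trace theorem to get $\| v \|_{s,\Gamma} \le C_T \| v \|_{s+\frac12,\Omega}$, and extend from $\D(\Omega)$ by density, yielding $M = C_T$. Your extra check that the endpoint $s=\frac12$ (where $s+\frac12=1$) is covered by the trace theorem's range $0<s<1$ is a worthwhile clarification the paper leaves implicit, but it does not change the argument.
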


\begin{proof}
For all $\varphi\in\D(\Omega)$ and $f \in H^{-s}(\Gamma)$, $s \in \left(0,\frac12\right]$, we can write
\begin{equation*}
\left| \int_\Omega\int_\Gamma \delta(x-y)f(y)\varphi(x)\d\Gamma_y \d
\Omega_x \right|=
\left| \int_\Gamma f(y)\varphi(y) \d\Gamma_y \right| \leq \| f \|_{-s,\Gamma}
\| \varphi\|_{s,\Gamma}.
\end{equation*}
We apply Theorem~\ref{theo:trace} to the second argument on the right hand side
\begin{equation*}
\langle   \LD f,\varphi \rangle \leq  C_T\| f \|_{-s,\Gamma}
\| \varphi\|_{s+\frac12,\Omega},
\end{equation*}
and the first part of the thesis follows with a density argument. 
\end{proof}

We observe that for any $f\in H^{-s}(\Gamma)$, with $s \leq \frac12$,
the operator $\LD f$ belongs to $H^{-1}(\Omega)$, and can be used as a
standard source term in the variational formulation of the Poisson
problem. In particular, if $f \in H^m(\Gamma)$, with $m\geq0$, it is
also in $H^{-\epsilon}(\Gamma)$ for any $\epsilon>0$.

The final variational problem can be formulated as follows.

\begin{problem}[Variational]
\label{pb:variational}
Given $b \in L^2(\Omega)$ and $f\in H^{-s}(\Gamma)$,
$s\in\left(0,\frac12\right]$, find $u \in H^1_0(\Omega)$ such that:
\begin{equation}
  \label{eq:variational-u-1}
  \begin{aligned}
    &(\nabla u, \nabla v)  =   (b,v)+ \duality{\LD  f, v} \qquad&& \forall v \in H^1_0(\Omega),  
      \end{aligned}
\end{equation}
where
\begin{equation}
  \label{eq:variational-u-2}
  \begin{aligned}
    & \duality{ \LD   f, v} :=  \int_\Gamma  f ~v \d \Gamma  && \forall v \in H^1_0(\Omega).
  \end{aligned}
\end{equation}
\end{problem}

We indicate with $(\cdot, \cdot)$ the $L^2(\Omega)$ inner product, and
with $\duality{\cdot, \cdot}$ the duality product between
$H^1_0(\Omega)$ and $H^{-1}(\Omega)$.

In particular, we have that the exact solution $u$ satisfies the
following regularity result:
\begin{lemma}[Continuous dependence on data]
  \label{lem:continuous dependence on data}
  Problem~\ref{pb:variational} is well posed and has a unique solution
  in $H^{\frac32-s}(\Omega)$ that satisfies
  
\begin{equation}
  \label{eq:continuous-estimate}
  |u |_{\frac32-s,\Omega} \leq C \| b +   \LD f  \|_{-\frac12-s,\Omega} \leq C 
  (\|  b \|_{0,\Omega} +  \|  f \|_{-s, \Gamma}).
\end{equation}
\end{lemma}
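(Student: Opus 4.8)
The plan is to split the statement into two parts: well-posedness in $H^1_0(\Omega)$, which I obtain from the Lax--Milgram lemma, and the extra regularity $u\in H^{3/2-s}(\Omega)$ together with the stability bound, which I obtain from an elliptic shift theorem applied to the data $g:=b+\LD f$. The key preliminary observation is that, since $s\le\frac12$, we have $-s-\frac12\ge -1$, so $g\in H^{-s-\frac12}(\Omega)\hookrightarrow H^{-1}(\Omega)$: indeed $b\in L^2(\Omega)=H^0(\Omega)\hookrightarrow H^{-s-\frac12}(\Omega)$ by the continuous embedding of higher- into lower-order Sobolev spaces, while $\LD f\in H^{-s-\frac12}(\Omega)$ by Theorem~\ref{theo:regularity M}.

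For well-posedness I would apply Lax--Milgram to $a(u,v)=(\nabla u,\nabla v)$ on $H^1_0(\Omega)$. Continuity is immediate and coercivity follows from the Poincar\'e inequality. The functional $v\mapsto (b,v)+\duality{\LD f,v}$ is bounded on $H^1_0(\Omega)$: the first term is controlled by $\|b\|_{0,\Omega}\|v\|_{1,\Omega}$, and for the second I use~\eqref{eq:bound-on-M} together with $\|v\|_{s+\frac12,\Omega}\le\|v\|_{1,\Omega}$, valid because $s+\frac12\le1$. This gives existence and uniqueness of $u\in H^1_0(\Omega)$ and the crude bound $|u|_{1,\Omega}\le C(\|b\|_{0,\Omega}+\|f\|_{-s,\Gamma})$.

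For the sharp regularity the strategy is to interpolate between the two endpoint isomorphisms of the Dirichlet Laplacian on the convex domain $\Omega$,
\begin{equation*}
-\Delta:\, H^1_0(\Omega)\to H^{-1}(\Omega),\qquad -\Delta:\, H^2(\Omega)\cap H^1_0(\Omega)\to L^2(\Omega),
\end{equation*}
the first being the Lax--Milgram isomorphism and the second the $H^2$-regularity result for the Poisson problem on convex domains. Choosing $\theta=\tfrac12-s\in[0,\tfrac12)$ and identifying the interpolation spaces $[H^{-1},L^2]_\theta=H^{-1+\theta}=H^{-s-\frac12}$ and $[H^1_0,\,H^2\cap H^1_0]_\theta=H^{1+\theta}\cap H^1_0=H^{3/2-s}\cap H^1_0$, one obtains that $-\Delta:\,H^{3/2-s}(\Omega)\cap H^1_0(\Omega)\to H^{-s-\frac12}(\Omega)$ is an isomorphism. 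Since the $u$ produced above solves $-\Delta u=g$ distributionally with $g\in H^{-s-\frac12}(\Omega)$, it coincides with the preimage of $g$, whence $u\in H^{3/2-s}(\Omega)$ and $\|u\|_{3/2-s,\Omega}\le C\|g\|_{-s-\frac12,\Omega}$. Estimate~\eqref{eq:continuous-estimate} then follows from $|u|_{3/2-s,\Omega}\le\|u\|_{3/2-s,\Omega}$, the triangle inequality, the embedding $\|b\|_{-s-\frac12,\Omega}\le\|b\|_{0,\Omega}$, and~\eqref{eq:bound-on-M}.

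The main obstacle is the correct identification of the interpolation spaces in the fractional range, and checking that the endpoint $H^2$ isomorphism genuinely holds on the merely Lipschitz (but convex) domain $\Omega$; the latter is exactly the Kadlec--Grisvard regularity theorem for convex domains. The former is delicate only near the threshold $\tfrac32$, where trace-compatibility conditions enter the description of the interpolated domain space; here, however, $3/2-s<3/2$ strictly, so the target is simply $H^{3/2-s}\cap H^1_0$ with no hidden boundary constraints, and the identification is standard.
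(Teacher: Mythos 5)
Your proof is correct and takes essentially the same route as the paper: the paper's one-line appeal to the problem being ``2-regular'' for $-1\le k\le 0$ on a convex domain, applied with $k=-\frac12-s$, is precisely the fractional shift theorem you reconstruct by interpolating the Lax--Milgram isomorphism $-\Delta: H^1_0(\Omega)\to H^{-1}(\Omega)$ with the Kadlec--Grisvard isomorphism $-\Delta: H^2(\Omega)\cap H^1_0(\Omega)\to L^2(\Omega)$. You simply make explicit what the paper cites as a known result, including the worthwhile checks that $\frac12+s\neq\frac12$ and $\frac32-s<\frac32$ keep the interpolation-space identifications away from the delicate Lions--Magenes thresholds.
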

\begin{proof}
  For convex domains, with Lipschitz boundary $\partial \Omega$,
  Problem~\ref{pb:variational} is 2-regular, and admits a unique
  solution that satisfies the estimate:
  \begin{equation}
    \label{eq:2-regularity}
    |u|_{k+2,\Omega} \leq \|b+\LD f\|_{k,\Omega},
  \end{equation}
  whenever the right hand side of Problem~\ref{pb:variational} is in
  $H^{k}(\Omega)$, $-1 \leq k\leq 0$.

  Exploiting Theorem~\ref{theo:regularity M} and taking
  $k = -\frac12-s$ in~\eqref{eq:2-regularity}, we obtain the thesis.
\end{proof}

This formulation does not require any approximation of the Dirac delta, since the regularity of $\LD$ is compatible with test functions in $H^1_0(\Omega)$, allowing a natural approximation by Galerkin methods, using, for example, finite elements~\cite{BoffiGastaldi-2003-a, BoffiGastaldi-2007-Numerical-stability-0, Heltai-2008-a}.

\section{Finite element approximation}
\label{sec:numer-appr}

We consider a decompositions of $\Omega$ into the triangulation $\Omega_{h}$, consisting of cells $K$ (quadrilaterals in 2D, and hexahedra in 3D) such that
\begin{enumerate}
\item $\overline{\Omega} = \cup \{ \overline{K} \in \Omega_{h} \}$;
\item
Any two cells $K,K'$ only intersect in common faces, edges, or vertices.
On $\Omega_{h}$ 
we define 
the finite dimensional
subspace ${W}^\ell_{h} \subset H^1_0(\Omega)$, 
such that
\begin{alignat}{5}
\label{eq: functional space v h}
{W}^\ell_h &:= \Bigl\{ {u}_h \in H^1_0(\Omega)\,&&\big|\, {u}_{h|K}  &&\in \mathcal{Q}^\ell(K), \, K &&\in \Omega_h \Bigr\} &&\equiv \text{span}\{ {v}_{h}^{i} \}_{i=1}^{N_{W}},
\end{alignat}
where $\mathcal{Q}^\ell(K)$ 
is a tensor product polynomial space of degree $\ell$ 
on the cells $K$, and $N_W$ 
is the dimension of the finite dimensional space.

\end{enumerate}

The index $h$ stands for the maximum radius of $K$, and we assume that $\Omega_h$ is shape regular, i.e., $\rho_K \leq h \leq C \rho_K$ where $\rho_K$ is the radius of the largest ball contained in $K$, and the inequality is valid for a generic constant $C>0$ independent on $h$. We assume, moreover, that 
\begin{equation}
  \label{eq:condition-on-K-Gamma}
  \begin{split}
    | K \cap \Gamma | &\leq C_0 h, \qquad \forall K \in \Omega_h,
  \end{split}
\end{equation}
where $C_0$ 
is a generic constant independent on $h$, and with $ | K \cap \Gamma |$ we indicate the $n-1$ dimensional Hausdorff measure of the portion of $\Gamma$ that lies in $K$. These assumptions are generally satisfied whenever $h$ is sufficiently small to capture all the geometrical features of both $\Gamma$ and $\Omega$. 

Using the space $W^\ell_h$, the finite dimensional version of Problem~\ref{pb:variational} can be written as
\begin{problem}[Discrete]
\label{pb:discrete}
  Given $b\in L^2(\Omega)$,
  $ f \in  H^{-s}(\Gamma)$, $s\in\left(0,\frac12\right]$, find $u_h \in W^\ell_h \subset H^1_0(\Omega)$ such that
\begin{equation}
\label{eq:discrete_problem-1}
\begin{aligned}
  & \duality{\nabla u_h, \nabla v_h} = (b, v_h) + \duality{
    \LD f, v_h} && \forall v_h \in W^\ell_h,
\end{aligned}
\end{equation}
where
\begin{equation}
\label{eq:discrete_problem-2}
\begin{aligned}
&  \text{ }  \duality{ \LD f ,v_h} := \phantom{-}\int_{\Gamma} f ~v_h \d
  \Gamma && \forall v_h \in W^\ell_h.
\end{aligned}
\end{equation}
\end{problem}

\begin{theo}[A-priori error estimates]
\label{theo:a-priori-standard}
  The finite element solution $u_h$ of Problem~\ref{pb:discrete} satisfies the following \emph{a-priori} error estimates for $f \in H^{-s}(\Gamma)$, $s \in \left(0,\frac12\right]$
  \begin{equation}
    \label{eq:general-estimate}
    \|u- u_h\|_{m,\Omega}\leq C h^{\frac32- s - m} (\|b\|_{0,\Omega} + \|f\|_{-s ,\Gamma}), \qquad m=0,1
  \end{equation}
  and
  \begin{equation}
    \label{eq:general-estimate-f-in-L2}
    \|u- u_h\|_{m,\Omega}\leq C h^{\frac32-\epsilon - m} (\|b\|_{0,\Omega} + \|f\|_{0 ,\Gamma}), \qquad \forall \epsilon \in \left(0,\frac12\right], \qquad m=0,1
  \end{equation}
  for $f \in L^2(\Gamma)$, where $u$ is the solution to Problem~\ref{pb:variational}.
  
\end{theo}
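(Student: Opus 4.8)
The plan is to treat Problem~\ref{pb:discrete} as a standard conforming Galerkin discretisation of the coercive, continuous bilinear form $a(u,v) := (\nabla u, \nabla v)$ on $H^1_0(\Omega)$, and to exploit the global regularity $u \in H^{\frac32-s}(\Omega)$ already furnished by Lemma~\ref{lem:continuous dependence on data}. The essential difficulty has in fact been moved upstream: that regularity result tells us exactly how smooth $u$ is, and everything below is a careful application of C\'ea's lemma and of the Aubin--Nitsche duality argument, the only non-routine ingredient being approximation estimates in \emph{fractional} Sobolev norms.

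First I would establish the energy estimate ($m=1$). Since $W^\ell_h \subset H^1_0(\Omega)$, Galerkin orthogonality $a(u-u_h,v_h)=0$ holds for all $v_h \in W^\ell_h$, and C\'ea's lemma gives $\|u-u_h\|_{1,\Omega} \leq C \inf_{v_h \in W^\ell_h}\|u-v_h\|_{1,\Omega}$. To bound the best-approximation error I would invoke a quasi-interpolation operator $I_h$ of Scott--Zhang (or Cl\'ement) type, which for $u \in H^r(\Omega)$, $1 \leq r \leq \ell+1$, satisfies $\|u-I_h u\|_{1,\Omega} \leq C h^{r-1}\|u\|_{r,\Omega}$, the estimate extending to fractional $r$ by interpolation between integer-order spaces. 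Taking $r=\frac32-s \in [1,\frac32)$ (legitimate since $s\in(0,\frac12]$) yields $\|u-u_h\|_{1,\Omega}\leq C h^{\frac12-s}\|u\|_{\frac32-s,\Omega}$, and combining this with the data bound $\|u\|_{\frac32-s,\Omega}\leq C(\|b\|_{0,\Omega}+\|f\|_{-s,\Gamma})$ from Lemma~\ref{lem:continuous dependence on data} (using Poincar\'e to pass from seminorm to norm) gives~\eqref{eq:general-estimate} for $m=1$.

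Next I would obtain the $L^2$ estimate ($m=0$) by the Aubin--Nitsche trick. Let $w \in H^1_0(\Omega)$ solve the adjoint problem $a(v,w)=(u-u_h,v)$ for all $v\in H^1_0(\Omega)$. Its datum $u-u_h$ lies in $L^2(\Omega)$ and, crucially, carries \emph{no} interface jump, so the $2$-regularity of the convex domain gives $w\in H^2(\Omega)$ with $\|w\|_{2,\Omega}\leq C\|u-u_h\|_{0,\Omega}$. Testing with $v=u-u_h$, subtracting an arbitrary $w_h\in W^\ell_h$ by Galerkin orthogonality, and applying continuity of $a$ together with $\|w-I_h w\|_{1,\Omega}\leq C h\|w\|_{2,\Omega}$, I obtain $\|u-u_h\|_{0,\Omega}^2 \leq C\,\|u-u_h\|_{1,\Omega}\,h\,\|w\|_{2,\Omega}$. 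Cancelling one factor of $\|u-u_h\|_{0,\Omega}$ and inserting the $m=1$ bound produces the extra power of $h$, giving~\eqref{eq:general-estimate} for $m=0$.

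Finally, the $L^2(\Gamma)$ variant~\eqref{eq:general-estimate-f-in-L2} is a corollary: for $f\in L^2(\Gamma)=H^0(\Gamma)$ the embedding $H^0(\Gamma)\hookrightarrow H^{-\epsilon}(\Gamma)$ gives $\|f\|_{-\epsilon,\Gamma}\leq\|f\|_{0,\Gamma}$ for every $\epsilon>0$ (as noted after Theorem~\ref{theo:regularity M}), so applying the first estimate with $s$ replaced by $\epsilon\in(0,\frac12]$ yields the claim. I expect the obstacles to be technical rather than conceptual: quoting the quasi-interpolation estimate with the correct range of fractional $r$, and tracking the exponent $\frac32-s$ consistently through both C\'ea and the duality step. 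The one point deserving genuine care is verifying that the adjoint solution really is $H^2$, which hinges on the dual datum $u-u_h$ being a plain $L^2$ function with no singular contribution on $\Gamma$; this is precisely what lets the duality argument recover the full extra power of $h$ despite the limited global smoothness of the primal solution.
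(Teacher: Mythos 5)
Your proposal is correct and follows essentially the same route as the paper: the paper's proof simply quotes the standard finite element estimate $\|u-u_h\|_{m,\Omega}\leq C h^{k-m}|u|_{k,\Omega}$ from Ciarlet and applies Lemma~\ref{lem:continuous dependence on data} with $k=\frac32-s$, which is exactly the machinery (C\'ea with fractional-order quasi-interpolation for $m=1$, Aubin--Nitsche duality with $H^2$-regularity of the convex dual problem for $m=0$, and the embedding $L^2(\Gamma)\hookrightarrow H^{-\epsilon}(\Gamma)$ for the second estimate) that you unpack explicitly. Your additional care about the dual datum $u-u_h$ being a plain $L^2$ function, so that the adjoint solution is genuinely $H^2(\Omega)$ despite the primal solution's limited smoothness, is precisely the point hidden inside the cited standard result.
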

\begin{proof}
For the finite element approximation defined in Problem~\ref{pb:discrete}, we expect error estimates of the type
\begin{equation}
\label{eq:error estimates standard}
 \|u- u_h\|_{m,\Omega}\leq C h^{k-m} |u|_{k,\Omega}, \qquad m \leq k \leq \ell+1, \qquad m=0,1,
\end{equation}
where $u$ is the solution to Problem~\ref{pb:variational} and $u_h$ is
the solution to Problem~\ref{pb:discrete} ~\cite{ciarlet78}. The
thesis follows applying Lemma~\ref{lem:continuous dependence on data}
with $k=\frac32-s$.

\end{proof}

The low regularity of $\LD f$ affects the numerical approximation of the problem, and produces sub-optimal (w.r.t. to the approximation degree $\ell$) error estimates when standard finite elements are used. This phenomena is known and was observed in the literature of the variational formulation of the  Immersed Boundary Method~\cite{BoffiGastaldiHeltaiPeskin-2008-a, Heltai2012b}, motivating this work.
 
The solution is expected to be at least $H^2(\Omega\setminus\Gamma)$, and it seems reasonable to assume that  the numerical solution to such problems is sub-optimal only in a small neighbourhood of $\Gamma$. 

In this work we show that the numerical solution obtained by solving
Problem~\ref{pb:discrete} with the classical finite element method
applied directly to Problem~\ref{pb:variational}, results in
approximate solutions which converge optimally when we measure the
error with properly chosen weighted Sobolev norms. Similar ideas are
presented in~\cite{Agnelli2014}, and
in~\cite{Dangelo2012,DAngelo2008}. Rigorous proofs of the well
posedeness of the approximation of Poisson type problems where the
source is given by a singular measure and the domain is a convex
polygonal or polyhedral domain are given in~\cite{Drelichman2018}.

A generalization to non-convex domains and to Stokes problem is
available in~\cite{Otarola2018}, provided that singularities are away
from the boundary of the domain $\Omega$.

\section{Weighted Sobolev spaces} 
\label{sec:weight-sobol-spac}

The set $\Gamma$ has zero measure, so it is reasonable to introduce weighted Sobolev norms, where the weight is chosen to be an appropriate power of the distance from the co-dimension one surface $\Gamma$. Such weight mitigates the jump of the gradient of $u$ across $\Gamma$, allowing a more regular variational formulation of the problem in the entire $\Omega$ in a rigorous and numerically convenient way.

We define the Hilbert space of measurable functions (see, e.g.,~\cite{Kufner1985, Turesson2000})
\begin{equation}
L^2_{\alpha}(\Omega)=\left\{u(x):\Omega\to\RR \text{ s.t. }\left(\int_\Omega u(x)^2 d^{2\alpha}(x)\ddx \right)^{\frac12} < \infty, \,\alpha \in \left(-\tfrac12,\tfrac12\right)\right\} 
\label{eq:functional-space}
\end{equation}
equipped with the  scalar product 
\begin{equation}
(u,v)_{\alpha} := \int_\Omega u(x)v(x) d^{2\alpha}(x)\ddx. 
\label{eq:scalar-product}
\end{equation}
In  \eqref{eq:functional-space} and  \eqref{eq:scalar-product}, $d$ is the distance between the point $x$ and the surface $\Gamma$, that is 
\begin{equation}
\label{eq:distance}
d(x)=\text{dist}(x,\Gamma).
\end{equation}

For any $\alpha$ in $\left(-\tfrac 12,\tfrac 12\right)$, the weighting function $w:\Re^n\to \Re_+$ defined by
  $w(x) := \text{dist}(x, \Gamma)^{2\alpha}$,  is a Muckenhoupt class $A_2$-weight, that is
  \begin{equation}
    \label{eq:muckencazz}
    \sup_{B=B_r(x), x \in \Re^n, r>0} \left( \frac{1}{|B|} \int_B w(x) \d x \right) 
    \left( \frac{1}{|B|} \int_B w(x)^{-1} \d x \right) < +\infty,
  \end{equation}
  where $B_r (x)$ is the ball centered at  $x$ with radius $r$, and $|B|$ is its measure~\cite{Muckenhoupt1972, Kufner1985, Turesson2000}.

The identity $\duality{u,v} = \duality{u d^{\alpha}, d^{-\alpha}v}$
implies that $L^2_{-\alpha}(\Omega)$ is contained in the dual space of  $L^2_{\alpha}(\Omega)$. 
We denote by $\|{u}\|_{0,\alpha,\Omega}$ the norm of a function $u$ in $L^2_{\alpha}(\Omega)$, i.e., $\|d^{\alpha}u\|_{0,0, \Omega}$, where $d$ is defined in \eqref{eq:distance}. From now on we will use the notation $\|u\|_{0,0, \Omega}\equiv \|u\|_{0, \Omega}$ for all $u \in L^2(\Omega)$.
The duality pairing between $L^2_{\alpha}(\Omega)$ and $L^2_{-\alpha}(\Omega)$ is indicated with
$\langle u,v\rangle$. 
The usual inequality  $\langle u,v\rangle = \langle d^\alpha u,d^{-\alpha} v\rangle \leq \|u\|_{0,\alpha, \Omega}~\|v\|_{0,-\alpha, \Omega}$ follows from Schwartz inequality in $L^2(\Omega)$.

Similarly, we define the weighted Sobolev spaces
\[
H^s_\alpha(\Omega)=\{u \text{ such that } D^\gamma u\in L^2_\alpha(\Omega),|\gamma|\leq s\},
\] 
where $s\in \mathbb{N}$, $\gamma$ is a multi-index and $D^\gamma$  its corresponding distributional derivative. These  weighted Sobolev spaces can be equipped with the following seminorms and norms
\[
|u|_{s,\alpha, \Omega}:=\left(\sum_{|\gamma|=s}\|D^\gamma u\|^2_{0, \alpha,\Omega}\right)^{\frac12}, \qquad  \|u\|_{s,\alpha, \Omega}:=\left(\sum_{k=0}^s|u|^2_{k,\alpha, \Omega}\right)^{\frac12},
\]
and we define the Kondratiev type weighted spaces $V^s_\alpha(\Omega)$ using the same seminorms, but weighting them differently according to the index of derivation, i.e., 
\[
V^s_\alpha := \{ u \text{ such that } D^\gamma u\in L^2_{\alpha-j}(\Omega),|\gamma|= s-j, \quad j =0, \dots, s \},
\]
equipped with the following norm:
\[
||| u |||_{s, \alpha, \Omega} := \left( \sum_{j = 0}^s |u|_{j, \alpha-s+j,\Omega}\right)^{\frac12}.
\]
For example, $|||u|||_{1,\alpha,\Omega} := |u|_{1,\alpha,\Omega} + ||u||_{0,\alpha-1,\Omega}$.
The norms in $V^1_\alpha$ and $H^1_\alpha$ are equivalent, but not
uniformly with respect to $\alpha$ \cite{Kufner1985}.

We define the space 
\begin{equation}
\label{eq:definition-W}
W_{\alpha} := \{ u \in H^1_{\alpha}(\Omega) \text{ such that } u|_{\partial \Omega} = 0 \},
\end{equation}
with norm $\| \cdot \|_{1,\alpha,\Omega}$ and we denote with
$W'_{\alpha}$ its dual space with norm
\begin{equation}
  \label{eq:W-minus-alpha-norm}
  \| f \|_{-1,\alpha, \Omega} := \sup_{0 \neq v \in W_{\alpha}}
  \frac{\duality{f, v}}{\| v\|_{1, \alpha, \Omega}}.
\end{equation}

Notice that an equivalent definition is obtained using the
$|||\cdot|||$ norms.

\begin{lemma}
  \label{lem:embedding}
  Given $\alpha \in (-\frac12, \frac12)$ and $\epsilon\geq0$ such that
  $\epsilon+\alpha\in (-\frac12,\frac12)$, the embeddings $H^m_{\alpha}
  \hookrightarrow H^m_{\alpha+\epsilon}$ and $W'_{\alpha+\epsilon}
  \hookrightarrow W'_{\alpha}$  are continuous. 
\end{lemma}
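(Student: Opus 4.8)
The plan is to reduce the dual embedding $W'_{\alpha+\epsilon}\hookrightarrow W'_{\alpha}$ to the primal embedding $H^m_\alpha \hookrightarrow H^m_{\alpha+\epsilon}$ by a duality argument, so the only genuine computation is the weight comparison underlying the first inclusion. Since $\Omega$ is bounded, the distance function $d(x)=\text{dist}(x,\Gamma)$ is bounded above by the diameter $D:=\text{diam}(\Omega)$. For $\epsilon\geq 0$ this gives the pointwise bound $d^{2\epsilon}(x)\leq D^{2\epsilon}$, and hence the weight comparison
\begin{equation*}
  d^{2(\alpha+\epsilon)}(x)=d^{2\alpha}(x)\,d^{2\epsilon}(x)\leq D^{2\epsilon}\,d^{2\alpha}(x),\qquad x\in\Omega.
\end{equation*}
Multiplying by $|D^\gamma u|^2$ and integrating yields $\|D^\gamma u\|_{0,\alpha+\epsilon,\Omega}\leq D^{\epsilon}\|D^\gamma u\|_{0,\alpha,\Omega}$ for every multi-index $\gamma$; summing over $|\gamma|\leq m$ produces $\|u\|_{m,\alpha+\epsilon,\Omega}\leq D^{\epsilon}\|u\|_{m,\alpha,\Omega}$, which is precisely the first continuous embedding (the inclusion being trivially injective, as it is set-theoretic). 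I stress that this step is elementary and uses only $\epsilon\geq 0$ together with boundedness of $\Omega$; the hypothesis $\alpha,\alpha+\epsilon\in(-\tfrac12,\tfrac12)$ enters only to guarantee that both weights lie in the Muckenhoupt class $A_2$, so that the spaces $H^m_\alpha$ and $H^m_{\alpha+\epsilon}$ (and correspondingly $W_\alpha$, $W_{\alpha+\epsilon}$) are well-defined complete spaces.

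For the dual embedding I would specialise the first one to $m=1$ and to functions vanishing on $\partial\Omega$, obtaining the continuous inclusion $W_\alpha\hookrightarrow W_{\alpha+\epsilon}$ with $\|v\|_{1,\alpha+\epsilon,\Omega}\leq D^{\epsilon}\|v\|_{1,\alpha,\Omega}$ for all $v\in W_\alpha$. Given $f\in W'_{\alpha+\epsilon}$, its restriction to $W_\alpha\subseteq W_{\alpha+\epsilon}$ satisfies, for every $v\in W_\alpha$,
\begin{equation*}
  \duality{f,v}\leq \|f\|_{-1,\alpha+\epsilon,\Omega}\,\|v\|_{1,\alpha+\epsilon,\Omega}\leq D^{\epsilon}\,\|f\|_{-1,\alpha+\epsilon,\Omega}\,\|v\|_{1,\alpha,\Omega},
\end{equation*}
and taking the supremum over $0\neq v\in W_\alpha$ in the definition~\eqref{eq:W-minus-alpha-norm} gives $\|f\|_{-1,\alpha,\Omega}\leq D^{\epsilon}\|f\|_{-1,\alpha+\epsilon,\Omega}$, i.e. the continuous embedding $W'_{\alpha+\epsilon}\hookrightarrow W'_\alpha$.

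The only point that deserves care is the injectivity of the dual restriction map, so that one truly obtains an embedding and not merely a bounded map: this requires $W_\alpha$ to be dense in $W_{\alpha+\epsilon}$. Here the $A_2$ property (ensured by $\alpha,\alpha+\epsilon\in(-\tfrac12,\tfrac12)$) is exactly what is needed, since for $A_2$ weights $\D(\Omega)$ is dense in both weighted spaces, and as $\D(\Omega)\subseteq W_\alpha\subseteq W_{\alpha+\epsilon}$ the required density follows at once. I therefore expect the two norm inequalities to be the routine part and this density/injectivity observation to be the step worth spelling out; finally, by the stated equivalence between the $\|\cdot\|$ and $|||\cdot|||$ norms, the very same argument transfers verbatim to the Kondratiev-type spaces $V^s_\alpha$.
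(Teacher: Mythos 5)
Your proof is correct and follows essentially the same route as the paper: the primal embedding via the pointwise bound $d^{2\epsilon}\leq \|d^{2\epsilon}\|_{L^\infty(\Omega)}$ applied to each derivative $D^\gamma u$, and the dual embedding by restricting $f\in W'_{\alpha+\epsilon}$ to $W_\alpha\hookrightarrow W_{\alpha+\epsilon}$ and taking the supremum in the dual norm~\eqref{eq:W-minus-alpha-norm}, which is precisely what the paper compresses into ``the dual case follows applying the definition of the dual norm.'' Your extra observation that injectivity of the restriction map requires density of $W_\alpha$ in $W_{\alpha+\epsilon}$ (via density of $\D(\Omega)$ for $A_2$ weights) is a point the paper leaves implicit, and it is a worthwhile addition rather than a deviation.
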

\begin{proof}
  For $\epsilon \ge 0$ the function $d^{2\epsilon}$ is bounded and
  continuous on $\overline \Omega$, by H\"older inequality, we have
  that
 \begin{equation}
    \| u \|^2_{0,\alpha+\epsilon, \Omega} = \| u^2 d^{2\alpha}
    d^{2\epsilon}\|_{L^1(\Omega)} \leq  \| u^2 d^{2\alpha}\|_{L^1(\Omega)} \| d^{2\epsilon}\|_{L^\infty(\Omega)} = 
    \|u\|^2_{0,\alpha, \Omega} \| d^{2\epsilon}\|_{L^\infty(\Omega)}
  \label{eq:embedding}
\end{equation}

The thesis follows by applying the inequality in
Equation~\eqref{eq:embedding} to $D^{\gamma}u$ for all
$|\gamma| \leq m$, and we get
\begin{equation}
  \label{eq:continuous-embedding}
  \|u\|_{m,\alpha+\epsilon,\Omega} \leq C_{\epsilon} \|u\|_{m,\alpha,\Omega},
\end{equation}
where $C_{\epsilon} = \max_{x\in\Omega} d^\epsilon(x)$.

The dual case follows applying the definition of the dual norm.
\end{proof}

\begin{theo}[Isomorphism of $-\Delta$]
  \label{theo:isomorphism}
  The Laplace operator $-\Delta$ is an isomorphism from $W_{\alpha}$ to
  $W'_{-\alpha}$, for any $\alpha$ in $(-\frac 12,\frac 12)$.
\end{theo}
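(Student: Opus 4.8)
The plan is to realise $-\Delta$ through the symmetric bilinear form $a(u,v):=(\nabla u,\nabla v)$ and to verify the hypotheses of the Banach--Ne\v{c}as--Babu\v{s}ka (generalised Lax--Milgram) theorem on the pair $W_\alpha\times W_{-\alpha}$. First I would check that $a$ is well defined and bounded: writing $\nabla u\cdot\nabla v=(d^{\alpha}\nabla u)\cdot(d^{-\alpha}\nabla v)$ and applying the Schwartz inequality in $L^2(\Omega)$ gives $a(u,v)\le |u|_{1,\alpha,\Omega}\,|v|_{1,-\alpha,\Omega}\le\|u\|_{1,\alpha,\Omega}\|v\|_{1,-\alpha,\Omega}$, so that $-\Delta u\in W'_{-\alpha}$ with $\|{-\Delta u}\|_{-1,-\alpha,\Omega}\le\|u\|_{1,\alpha,\Omega}$. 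It is precisely the pairing of $L^2_\alpha$ with $L^2_{-\alpha}$ that makes the two weighted gradients dual, and the deeper estimates below will be available exactly on the $A_2$ range $\alpha\in(-\tfrac12,\tfrac12)$.

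By the Banach--Ne\v{c}as--Babu\v{s}ka theorem it then suffices to prove (i) the inf--sup bound $\inf_{0\neq u\in W_\alpha}\sup_{0\neq v\in W_{-\alpha}} a(u,v)/(\|u\|_{1,\alpha,\Omega}\|v\|_{1,-\alpha,\Omega})\ge\beta>0$ together with (ii) the non-degeneracy condition that for every $0\neq v\in W_{-\alpha}$ there is $u\in W_\alpha$ with $a(u,v)>0$. Here I would exploit that $a$ is symmetric and that the construction is invariant under $\alpha\mapsto-\alpha$, which merely exchanges $W_\alpha$ and $W_{-\alpha}$: condition (ii) at the parameter $\alpha$ is exactly the inf--sup bound (i) written at the parameter $-\alpha$. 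Hence it is enough to establish (i) for every $\alpha\in(-\tfrac12,\tfrac12)$, and (ii) comes for free.

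The inf--sup estimate is equivalent to the a priori bound $|u|_{1,\alpha,\Omega}\le C\,\|{-\Delta u}\|_{-1,-\alpha,\Omega}$, which is the crux. The clean route is to identify $f\in W'_{-\alpha}$ with a pair $(g_0,g)\in L^2_\alpha(\Omega)\times L^2_\alpha(\Omega)^n$ via $\langle f,v\rangle=(g_0,v)+(g,\nabla v)$ (Hahn--Banach extension of $v\mapsto\langle f,v\rangle$ along the isometry $v\mapsto(v,\nabla v)$ into $L^2_{-\alpha}\times(L^2_{-\alpha})^n$, whose dual is $L^2_\alpha\times(L^2_\alpha)^n$), so that solving $-\Delta u=f$ with $u|_{\partial\Omega}=0$ reduces to controlling $\nabla u=\nabla(-\Delta)^{-1}(g_0-\mathrm{div}\,g)$ in $L^2_\alpha$. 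The operator $\nabla(-\Delta)^{-1}\mathrm{div}$ is a composition of Riesz transforms, i.e. a singular integral of Calder\'on--Zygmund type, while the zeroth-order contribution $\nabla(-\Delta)^{-1}g_0$ is smoothing and of lower order; since $d^{2\alpha}$ lies in the Muckenhoupt class $A_2$ by~\eqref{eq:muckencazz}, the weighted theory~\cite{Muckenhoupt1972,Turesson2000,Kufner1985} yields $\|\nabla u\|_{0,\alpha,\Omega}\le C_\alpha(\|g_0\|_{0,\alpha,\Omega}+\|g\|_{0,\alpha,\Omega})$, and taking the infimum over representations gives the bound. The weighted Poincar\'e inequality valid for $A_2$ weights then upgrades the seminorm to the full norm on $W_\alpha$, closing the inf--sup. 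On the bounded convex $\Omega$ the whole-space Calder\'on--Zygmund argument must be transplanted to $(-\Delta)^{-1}$ with homogeneous Dirichlet data, for which I would invoke the $A_2$-weighted elliptic regularity of~\cite{Nochetto2016,Drelichman2018}.

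I expect the main obstacle to be precisely this weighted a priori estimate, and it is worth stressing why it resists an elementary choice of test function. The tempting candidate $v=d^{2\alpha}u$ does \emph{not} belong to $W_{-\alpha}$: one computes $d^{-\alpha}\nabla(d^{2\alpha}u)=d^{\alpha}\nabla u+2\alpha\,d^{\alpha-1}u\,\nabla d$, and the term $d^{\alpha-1}u$ fails to be square integrable whenever $u$ has a non-zero trace on $\Gamma$, since $\int d^{2\alpha-2}$ diverges near $\Gamma$ for every $\alpha\in(-\tfrac12,\tfrac12)$ (a Hardy inequality across a co-dimension one surface is available only for functions vanishing on $\Gamma$). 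As the solutions of interest are continuous across $\Gamma$ with non-trivial trace, no such algebraic supremiser exists and the harmonic-analytic input of the $A_2$ theory is unavoidable. Once the inf--sup and the non-degeneracy are in hand, the Banach--Ne\v{c}as--Babu\v{s}ka theorem delivers that $-\Delta:W_\alpha\to W'_{-\alpha}$ is a bounded bijection with bounded inverse, i.e. an isomorphism, for every $\alpha\in(-\tfrac12,\tfrac12)$, as claimed.
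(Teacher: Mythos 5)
Your proposal is correct in outline, but it takes a genuinely different route from the paper: the paper's entire proof is a one-line appeal to \cite[Theorem 2.7]{Drelichman2018} with $k=n-1$ and $p=2$, whereas you open the black box and reconstruct the argument, namely the Banach--Ne\v{c}as--Babu\v{s}ka framework on $W_\alpha\times W_{-\alpha}$, the elegant symmetry trick reducing non-degeneracy at $\alpha$ to the inf--sup at $-\alpha$, the Hahn--Banach representation of $W'_{-\alpha}$ by pairs $(g_0,g)\in L^2_\alpha\times (L^2_\alpha)^n$, and the reduction of the crucial a priori bound to $A_2$-weighted boundedness of Riesz-transform-type singular integrals plus a weighted Poincar\'e inequality. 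This is essentially the skeleton of the proof \emph{inside} the cited reference, so what your version buys is transparency about where the Muckenhoupt condition~\eqref{eq:muckencazz} enters and why the range $\alpha\in(-\frac12,\frac12)$ is sharp (your computation showing that the naive supremiser $v=d^{2\alpha}u$ produces the non-integrable term $d^{\alpha-1}u$ for functions with non-zero trace on $\Gamma$ is correct and genuinely illuminating); what the paper's version buys is brevity, since at your decisive step --- transplanting the Calder\'on--Zygmund estimate to the Dirichlet problem on the bounded convex $\Omega$ --- you must in any case defer to the same literature \cite{Drelichman2018,Nochetto2016}, which makes your argument borderline circular if read as a substitute for, rather than an exposition of, that citation. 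One point you should make explicit: when you bound $\nabla u$ by writing $u=(-\Delta)^{-1}(g_0-\operatorname{div} g)$ you implicitly use uniqueness of the solution in the \emph{larger} space $W_\alpha\supset H^1_0(\Omega)$ (for $\alpha>0$), which is not standard Lax--Milgram; it is, however, easily filled: since $\Gamma$ has co-dimension one, $d^{-2\alpha}\in L^1_{\mathrm{loc}}$ for $\alpha<\frac12$, so $\D(\Omega)\subset W_{-\alpha}$, hence $-\Delta u=0$ in $W'_{-\alpha}$ implies $u$ is harmonic across $\Gamma$ by Weyl's lemma, therefore locally smooth near $\Gamma$, so $u$ itself lies in $W_{-\alpha}$ and testing with $v=u$ gives $u\equiv 0$.
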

\begin{proof}
  The proof is an immediate consequence of~\cite[Theroem
  2.7]{Drelichman2018}: taking $k=n-1$ and $p=2$ in that
  theorem, one obtains that the Laplace operator $-\Delta$ is an
  isomorphism from $W_{\alpha}$ to $(W_{-\alpha})'$ (and we will write
  this space as $W'_{-\alpha}$).
\end{proof}

\begin{lemma}[Weighted space of Dirac terms]
  \label{lem:weighted space of dirac terms}
  The operator $  \LD f$ defined in Problem~\eqref{eq:problem_dirac} is in $W'_{-\alpha}$ for any $\alpha$ in $[0,\frac12)$.
\end{lemma}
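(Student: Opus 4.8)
The plan is to show directly that the linear functional $v \mapsto \langle \LD f, v\rangle = \int_\Gamma f\, v \, \d\Gamma$, which is defined on $\D(\Omega)$ through~\eqref{eq:distribution-S-D-applied-on-v}, extends to a bounded functional on the weighted space $W_{-\alpha}$. Since the norm of $W'_{-\alpha}$ is the dual norm~\eqref{eq:W-minus-alpha-norm}, it suffices to produce a constant $C$, independent of $v$, such that
\[
\left| \int_\Gamma f\, v \, \d\Gamma \right| \le C\, \|f\|_{-s,\Gamma}\, \|v\|_{1,-\alpha,\Omega} \qquad \forall v \in W_{-\alpha},
\]
and then to conclude by a density argument. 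The whole point is to control the trace of $v$ on $\Gamma$ by the weighted $H^1_{-\alpha}$ norm, and the restriction $\alpha \ge 0$ is precisely what makes this possible with the unweighted trace theorem already at our disposal.

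First I would reduce the weighted norm to the unweighted one. Because $\Omega$ is bounded, $d$ is bounded on $\overline\Omega$, and since $\alpha \ge 0$ I can apply Lemma~\ref{lem:embedding} with base index $-\alpha$ and shift $\epsilon = \alpha$, obtaining the continuous embedding $H^1_{-\alpha}(\Omega) \hookrightarrow H^1_0(\Omega)$, i.e. $\|v\|_{1,\Omega} \le C_\alpha \|v\|_{1,-\alpha,\Omega}$ with $C_\alpha = \max_{\overline\Omega} d^{\alpha}$. In particular every $v \in W_{-\alpha}$ is a genuine $H^1_0(\Omega)$ function, so that its trace on $\Gamma$ is well defined. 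Applying the Trace Theorem~\ref{theo:trace} with $s = \tfrac12$ then gives $\|v\|_{1/2,\Gamma} \le C_T \|v\|_{1,\Omega}$.

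The final step is the duality pairing on $\Gamma$. Since $s \le \tfrac12$, the embedding $H^{1/2}(\Gamma) \hookrightarrow H^{s}(\Gamma)$ yields $v|_\Gamma \in H^{s}(\Gamma)$ with $\|v\|_{s,\Gamma} \le C\|v\|_{1/2,\Gamma}$, and pairing against $f \in H^{-s}(\Gamma)$ gives $\left|\int_\Gamma f\, v \, \d\Gamma\right| \le \|f\|_{-s,\Gamma}\|v\|_{s,\Gamma}$. Chaining the three inequalities produces the desired bound and shows that $\LD f \in W'_{-\alpha}$ with $\|\LD f\|_{-1,-\alpha,\Omega} \le C\|f\|_{-s,\Gamma}$. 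I do not expect any serious obstacle here: the argument is essentially the weighted analogue of the proof of Theorem~\ref{theo:regularity M}, the only genuinely new ingredient being the embedding of Lemma~\ref{lem:embedding}, whose one-sided validity ($\epsilon = \alpha \ge 0$) explains why the statement is confined to $\alpha \in [0,\tfrac12)$. The single point deserving care is the density/extension argument required to pass from the definition of $\LD f$ on $\D(\Omega)$ to its action on all of $W_{-\alpha}$, which I would handle by observing that the estimate above holds on the dense subspace $\D(\Omega)$ and then extending the functional by continuity.
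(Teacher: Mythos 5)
Your proof is correct, but it follows a genuinely different route from the paper's. The paper proves this lemma indirectly: it takes the explicit solution $u=z+p$ built in Section~\ref{sec:immers-bound-meth}, notes that $z+p\in H^1(\Omega)$ with zero boundary trace and hence lies in $W_\alpha$ by Lemma~\ref{lem:embedding}, and then invokes the isomorphism Theorem~\ref{theo:isomorphism} (imported from~\cite{Drelichman2018}) to conclude that $-\Delta(z+p)=b+\LD f$, and therefore $\LD f$ itself, belongs to $W'_{-\alpha}$. You instead estimate the functional directly: the embedding $H^1_{-\alpha}(\Omega)\hookrightarrow H^1_0(\Omega)$ (Lemma~\ref{lem:embedding} with base index $-\alpha$ and shift $\epsilon=\alpha$, which is exactly where $\alpha\ge 0$ enters), the trace inequality with $s=\tfrac12$, and the duality pairing on $\Gamma$, yielding $\lvert\duality{\LD f, v}\rvert\le C\,\|f\|_{-s,\Gamma}\,\|v\|_{1,-\alpha,\Omega}$. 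All three steps check out against the paper's hypotheses. Your approach buys something the paper's proof of this lemma does not: it is self-contained (no reliance on the isomorphism theorem, nor on the existence of $p$ via the boundary integral representation) and it is quantitative, giving the explicit bound $\|\LD f\|_{-1,-\alpha,\Omega}\lesssim\|f\|_{-s,\Gamma}$ — indeed, the paper itself reproduces exactly your chain of inequalities later, in the proof of Lemma~\ref{lem:continuous dependence on data - weighted case}. The paper's route buys brevity and sidesteps the one point in your argument that deserves a caveat: density of $\D(\Omega)$ in $W_{-\alpha}$ is never established in the paper. You can avoid invoking it altogether by observing that, since $W_{-\alpha}\hookrightarrow H^1_0(\Omega)$, every $v\in W_{-\alpha}$ already has a trace in $H^{1/2}(\Gamma)$, so the formula $\duality{\LD f,v}:=\int_\Gamma f\,v\,\d\Gamma$ (which is precisely how Problem~\ref{pb:weighted} defines the pairing) makes sense directly on all of $W_{-\alpha}$, and your estimate shows it is bounded there; no extension by continuity is needed.
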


\begin{proof}
  Since both $z$ and $p$ belong to $H^1(\Omega)$, by
  Lemma~\ref{lem:embedding}, they also belongs to
  $H^1_\alpha(\Omega)$, for $\alpha \in
  [0,1/2)$. Theorem~\ref{theo:isomorphism} applied to $z+p$ implies
  that the function $-\Delta (z+p) := b + \LD f$ belongs to
  $W'_{-\alpha}$, independently on the choice of $b\in L^2(\Omega)$
  and $f \in H^{-s}(\Gamma)$, with $s\in \left(0,\frac12\right]$.
\end{proof}

The weighted variational problem can be written as follows

\begin{problem}[Weighted variational]
  \label{pb:weighted}
  Given $b\in L^2(\Omega)$ and $ f\in H^{-s}(\Gamma)$, for
  any $\alpha$ in $[0,\frac12)$, find $u \in W_{\alpha}$ such that
  \begin{equation}
\label{eq:problem_weighted-1}
\begin{aligned}
& \duality{\nabla u, \nabla v} =  \duality{b, v}  + \duality{  \LD   f, v} && \forall v \text{ in } W_{-\alpha}, 
\end{aligned}
\end{equation}
where
  \begin{equation}
\label{eq:problem_weighted-2}
\begin{aligned}
& \duality{  \LD f,v}  := \phantom{-}\int_\Gamma   f~v \d \Gamma  && \forall v \in W_{-\alpha}.
\end{aligned}
\end{equation}
\end{problem}

\begin{lemma}[Continuous dependence on data -- weighted case]
  \label{lem:continuous dependence on data - weighted case}
  Problem~\ref{pb:weighted} is well posed and has a unique solution
  that satisfies
  
\begin{equation}
  \label{eq:continuous-estimate}
  \|u \|_{1,\alpha,\Omega} \leq C \| b +   \LD f  \|_{-1, -\alpha,\Omega} \leq C 
    (\|  b \|_{0,0,\Omega} +  \|  f \|_{-s, \Gamma}).
\end{equation}
\end{lemma}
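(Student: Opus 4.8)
The plan is to recognise Problem~\ref{pb:weighted} as the weak form of $-\Delta u = b + \LD f$ posed between $W_\alpha$ and the test space $W_{-\alpha}$, and to read off existence, uniqueness and the first inequality directly from the isomorphism in Theorem~\ref{theo:isomorphism}. The only genuine work is to show that the right-hand side $b + \LD f$ is an element of $W'_{-\alpha}$ whose dual norm is controlled by $\|b\|_{0,\Omega} + \|f\|_{-s,\Gamma}$; once this is in hand, applying the bounded inverse of $-\Delta$ and the triangle inequality closes the argument.

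First I would dispatch the bulk term $b$. For $v \in W_{-\alpha}$ I would split $\duality{b,v} = \int_\Omega (d^\alpha b)(d^{-\alpha} v)\d x$ and apply the Cauchy--Schwarz inequality in $L^2(\Omega)$ to get $\duality{b,v} \le \|b\|_{0,\alpha,\Omega}\|v\|_{0,-\alpha,\Omega}$. Since $\alpha \ge 0$ the weight $d^{2\alpha}$ is bounded on $\overline\Omega$, so Lemma~\ref{lem:embedding} (with $m=0$) gives $\|b\|_{0,\alpha,\Omega} \le C\|b\|_{0,\Omega}$, while $\|v\|_{0,-\alpha,\Omega} \le \|v\|_{1,-\alpha,\Omega}$. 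This yields $\|b\|_{-1,-\alpha,\Omega} \le C\|b\|_{0,\Omega}$.

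Then I would treat the singular term $\LD f$. Membership in $W'_{-\alpha}$ is already guaranteed by Lemma~\ref{lem:weighted space of dirac terms}; what I still need is the quantitative estimate $\|\LD f\|_{-1,-\alpha,\Omega} \le C\|f\|_{-s,\Gamma}$. The key observation is that for $\alpha\in[0,\tfrac12)$ the weight $d^{-2\alpha}$ is bounded below by a positive constant on the bounded domain $\Omega$, so that $\|v\|_{1,\Omega} \le C\|v\|_{1,-\alpha,\Omega}$ and hence $W_{-\alpha}\hookrightarrow H^1_0(\Omega)$ continuously. For such $v$ I would estimate $\duality{\LD f, v} = \int_\Gamma f v \d \Gamma \le \|f\|_{-s,\Gamma}\|v\|_{s,\Gamma}$ by duality on $\Gamma$, then use $H^{\frac12}(\Gamma)\hookrightarrow H^s(\Gamma)$ (valid since $s\le\tfrac12$) together with the trace bound \eqref{eq:trace_theorem} of Theorem~\ref{theo:trace} at $s=\tfrac12$ to obtain $\|v\|_{s,\Gamma}\le C\|v\|_{1,\Omega}\le C\|v\|_{1,-\alpha,\Omega}$. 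Taking the supremum over $v$ gives the desired bound.

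Finally I would assemble the pieces: Theorem~\ref{theo:isomorphism} provides the unique $u\in W_\alpha$ with $-\Delta u = b + \LD f$, which is exactly the solution of Problem~\ref{pb:weighted}, and the bounded inverse gives $\|u\|_{1,\alpha,\Omega}\le C\|b+\LD f\|_{-1,-\alpha,\Omega}$, the first inequality. The triangle inequality combined with the two bounds above then produces the second inequality. I expect the main obstacle to be the control of the Dirac term: naively, testing $f \in H^{-s}(\Gamma)$ against the trace of a $W_{-\alpha}$ function looks as though it requires a dedicated weighted trace theorem, but this is circumvented cleanly by the lower bound $d^{-2\alpha}\ge c>0$, which reduces the whole estimate to the unweighted trace inequality already available in Theorem~\ref{theo:trace}.
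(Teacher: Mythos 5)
Your proposal is correct and follows essentially the same route as the paper's proof: well-posedness and the first inequality from the isomorphism of Theorem~\ref{theo:isomorphism}, the bulk term bounded by weighted Cauchy--Schwarz plus the boundedness of $d^{2\alpha}$, and the Dirac term bounded by $H^{-s}(\Gamma)$--$H^{s}(\Gamma)$ duality, the embedding into $\|v\|_{\frac12,\Gamma}$, the trace inequality of Theorem~\ref{theo:trace}, and the embedding $W_{-\alpha}\hookrightarrow H^1_0(\Omega)$ (your lower bound $d^{-2\alpha}\ge c>0$ is exactly the content of Lemma~\ref{lem:embedding} as the paper invokes it). No gaps; the argument matches the paper step for step.
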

\begin{proof}
  The chain of inequalities come directly from
  Theorem~\ref{theo:isomorphism}, Lemma~\ref{lem:embedding}, and from Theorem~\ref{theo:regularity M}. For the second inequality we first
  consider the term
  \begin{equation*}
    \| b  \|_{-1, -\alpha,\Omega} :=  \sup_{0 \neq v \in W_{-\alpha}}
  \frac{\duality{b, v}}{\| v\|_{1, -\alpha, \Omega}}.
\end{equation*}

In particular,
\begin{equation*}
  \duality{b, v} \leq \| b \|_{0,\alpha,\Omega}\| v
  \|_{0,-\alpha,\Omega} \leq \sqrt{\| d^{2\alpha}
    \|_{L^\infty(\Omega)}}  \| b\|_{0,0,\Omega}  \|v\|_{1,-\alpha,\Omega},
\end{equation*}
since
\begin{equation*}
  \| b \|^2_{0,\alpha,\Omega} = \| b^2 d^{2\alpha} \|_{L^1(\Omega)}
  \leq \| b\|_{0,0,\Omega} \sqrt{\| d^{2\alpha} \|_{L^\infty(\Omega)}}.
\end{equation*}

For the second term, we exploit the definition of the operator $\LD$:
\begin{equation*}
\duality{\LD f, v} := \int_{\Gamma} f v \d\Gamma \leq \| f
\|_{-s,\Gamma} \| v \|_{s,\Gamma} \leq \| f
\|_{-s,\Gamma} \| v \|_{\frac12,\Gamma} ,
\end{equation*}
and use a trace inequality and Lemma~\ref{lem:embedding}:
\begin{equation*}
  \| v \|_{\frac12,\Gamma} \leq C_T \|v\|_{1,0,\Omega} \leq C_T
  C_\alpha \|v\|_{1,-\alpha,\Omega}.
\end{equation*}

The thesis follows with $C = \max\left\{\sqrt{\| d^{2\alpha}
  \|_{L^\infty(\Omega)}}, C_T C_\alpha\right\}$.
\end{proof}

\begin{remark}[Non-weighted formulation]
  We remark here that the standard variational formulation, presented
  in Problem~\ref{pb:variational}, is a special case of
  Problem~\ref{pb:weighted} when the power of the distance is taken to
  be zero. In this case $W_\alpha = W_{-\alpha} = H^1_0(\Omega)$ and
  we recover Problem~\ref{pb:variational}. Moreover, the embedding
  $W_{-\alpha} \hookrightarrow H^1_0(\Omega)$ implies that the
  (unique) solution to Problem~\ref{pb:variational} is the same as the
  solution to Problem~\ref{pb:weighted}, and it belongs to
  $W_{\alpha} \cap H^{\frac32-s}(\Omega)$ for any
  $\alpha \in \left[0,\frac12\right)$.
\end{remark}


\begin{lemma}[Finite dimensional subspace of $W_\alpha$]
\label{lem:Wh-in-Walpha}
The finite dimensional space ${W}^\ell_{h} $ is a subspace of  $W_{\alpha}$ for all $\alpha$ in $ (-\frac12,\frac12)$. 
\end{lemma}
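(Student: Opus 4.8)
The plan is to verify directly the two requirements hidden in the definition~\eqref{eq:definition-W} of $W_\alpha$: that every $u_h \in W^\ell_h$ vanishes on $\partial\Omega$, and that $u_h \in H^1_\alpha(\Omega)$, i.e.\ that both $\int_\Omega u_h^2\, d^{2\alpha}\,\d x$ and $\int_\Omega |\nabla u_h|^2\, d^{2\alpha}\,\d x$ are finite. The boundary condition is free: by construction $W^\ell_h \subset H^1_0(\Omega)$, so $u_h|_{\partial\Omega}=0$ for every $u_h \in W^\ell_h$. Hence the whole content of the lemma reduces to the finiteness of the two weighted integrals above, uniformly for $\alpha \in (-\tfrac12,\tfrac12)$.

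For $\alpha \ge 0$ this is immediate, since $d^{2\alpha}$ is bounded on the bounded domain $\Omega$ and the inclusion $W^\ell_h \subset H^1_0(\Omega) \hookrightarrow H^1_\alpha(\Omega)$ is exactly the embedding of Lemma~\ref{lem:embedding} applied with base weight $0$ and shift $\epsilon = \alpha$. The only case that requires an argument is $\alpha \in (-\tfrac12, 0)$, where $d^{2\alpha}$ is singular along $\Gamma$ and Lemma~\ref{lem:embedding} points the wrong way. Here I would exploit the piecewise-polynomial nature of $u_h$: on each cell $K$ the restriction $u_{h|K} \in \mathcal{Q}^\ell(K)$ is a polynomial, hence smooth and bounded on $\overline K$, so $u_h$ and $\nabla u_h$ are bounded on each cell; since the triangulation $\Omega_h$ has finitely many cells and $u_h$ is continuous, it follows that $u_h, \nabla u_h \in L^\infty(\Omega)$.

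It then suffices to bound the weighted integrals by the $L^\infty$ norms times the total mass of the weight, e.g.
\[
\|u_h\|_{0,\alpha,\Omega}^2 = \int_\Omega u_h^2\, d^{2\alpha}\,\d x \le \|u_h\|_{L^\infty(\Omega)}^2 \int_\Omega d^{2\alpha}\,\d x,
\]
and identically for $\nabla u_h$. The last step, and the only genuinely non-trivial point, is that $\int_\Omega d^{2\alpha}\,\d x < \infty$ for $\alpha \in (-\tfrac12,0)$. This follows from the fact, recorded after~\eqref{eq:muckencazz}, that $d^{2\alpha}$ is a Muckenhoupt $A_2$ weight, and such weights are locally integrable; since $\Omega$ is bounded, local integrability yields $d^{2\alpha} \in L^1(\Omega)$. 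Equivalently, one may argue directly: $\Gamma$ is a Lipschitz co-dimension one surface, so the co-area formula reduces the integral in a tubular neighbourhood of $\Gamma$ to $\int_0^\delta t^{2\alpha}\,\d t$ times a bounded surface measure, which converges precisely because $2\alpha > -1$, i.e.\ $\alpha > -\tfrac12$. Combining the displayed bound with the finiteness of $\int_\Omega d^{2\alpha}\,\d x$ gives $u_h \in H^1_\alpha(\Omega)$, and together with $u_h|_{\partial\Omega}=0$ we conclude $u_h \in W_\alpha$. I expect the main obstacle to be merely making the integrability of the singular weight rigorous; everything else is bounded-times-finite-measure bookkeeping.
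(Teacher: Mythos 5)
Correct, and essentially the paper's own argument: the paper likewise observes that $W^\ell_h \subset H^1_0(\Omega)\cap C^0(\overline{\Omega})\cap W^{1,\infty}(\Omega)$ and bounds $\| u_h \|^2_{0,\alpha,\Omega} = \| u_h^2 d^{2\alpha} \|_{L^1(\Omega)} \leq \| u_h^2 \|_{L^\infty(\Omega)}\, \| d^{2\alpha} \|_{L^1(\Omega)}$ (and the same for $\nabla u_h$), treating all $\alpha \in \left(-\tfrac12,\tfrac12\right)$ in one stroke rather than via your case split at $\alpha = 0$. The only difference is that the paper leaves the finiteness of $\| d^{2\alpha} \|_{L^1(\Omega)}$ implicit, whereas you justify it correctly, either through local integrability of Muckenhoupt $A_2$ weights or through the tubular-neighbourhood computation $\int_0^\delta t^{2\alpha}\,\mathrm{d}t < \infty$ for $2\alpha > -1$, so your write-up is if anything slightly more complete.
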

\begin{proof}
  The space ${W}^\ell_{h}$ is a subspace of
  $H^1_0(\Omega) \cap C^0(\overline\Omega) \cap W^{1,\infty}(\Omega)$,
  i.e., finite element functions are Lipschitz continuous. Following
  the same lines of Lemma~\ref{lem:embedding}, we have:
  \begin{equation}
    \label{eq:wh-embedding}
    \| u_h \|^2_{0,\alpha,\Omega} = \| u_h^2 d^{2\alpha}
    \|_{L^1(\Omega)} \leq \| u_h^2 \|_{L^{\infty}(\Omega)}
    \|d^{2\alpha}\|_{L^1(\Omega)} \quad \forall u_h \in W^\ell_{h},
    \quad \alpha
    \in \left(-\frac12, \frac12\right),
  \end{equation}
  and similarly for the gradients:
   \begin{equation}
    \label{eq:wh-embedding-gradients}
    \| \nabla u_h \|^2_{0,\alpha,\Omega} \leq \| (\nabla u_h)^2 \|_{L^{\infty}(\Omega)}
    \|d^{2\alpha}\|_{L^1(\Omega)} \quad \forall u_h \in W^\ell_{h}\quad \alpha
    \in \left(-\frac12, \frac12\right).
  \end{equation}
\end{proof}

This allows us to use the same finite dimensional space for both test
and trial functions in the numerical implementation, which is
identical to the classical Galerkin approximation.

\begin{theo}[Stability of Problem~\ref{pb:weighted}]
  \label{theo:stability}
Let $|\alpha| < \frac12$, then there exist $C_1$, $C_2$, and $C_3$ such that
\[
\sup_{v_h\in W_h^\ell}\frac{\duality{\nabla u_h, \nabla v_h}}{\|
  \nabla v_h \|_{0,-\alpha,\Omega}} \ge C_1 \| \nabla u_h \|_{0,\alpha,\Omega}, \qquad 
\sup_{v_h\in W_h^\ell}\frac{\duality{\nabla u_h, \nabla v_h}}{\|
  \nabla u_h \|_{0,\alpha,\Omega}} \ge C_2 \| \nabla v_h \|_{0,-\alpha,\Omega}.
\]
The Galerkin approximation (Problem~\ref{pb:discrete}) of Problem~\ref{pb:weighted} in
$W^\ell_{h}$ is stable, and the following error estimate holds
\begin{equation}
\label{eq:optimal_error_estimate}
\| u- u_h \|_{1,\alpha,\Omega} \lesssim C_3 \inf_{v_h\in W_h^\ell}\| u-v_h \|_{1,\alpha,\Omega}.
\end{equation}
\end{theo}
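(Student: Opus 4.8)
The plan is to read the two displayed inequalities as the pair of discrete inf--sup conditions for the Dirichlet form $a(u,v):=\int_\Omega \nabla u\cdot\nabla v\,\d x$ on the pair $W_\alpha\times W_{-\alpha}$, and then to deduce the quasi--optimal estimate~\eqref{eq:optimal_error_estimate} from the generalised Lax--Milgram (Banach--Ne\v{c}as--Babu\v{s}ka) theorem. Boundedness of the form is immediate and sharp: using the duality identity $\duality{u,v}=\duality{d^{\alpha}u,\,d^{-\alpha}v}$ and the Cauchy--Schwarz inequality in $L^2(\Omega)$ one gets $a(u,v)\le \|\nabla u\|_{0,\alpha,\Omega}\,\|\nabla v\|_{0,-\alpha,\Omega}$, so the continuity constant equals $1$. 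Since $a$ is symmetric, the second displayed condition is the transpose of the first, obtained by interchanging the roles of trial and test functions and replacing $\alpha$ by $-\alpha$; because the statement is claimed for the full range $|\alpha|<\tfrac12$, it therefore suffices to establish the first inf--sup for every such $\alpha$, and the second follows for free.

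The backbone at the continuous level is Theorem~\ref{theo:isomorphism}. Written variationally, the isomorphism $-\Delta\colon W_\alpha\to W'_{-\alpha}$ says exactly that for every $w\in W_\alpha$ one has $\sup_{v\in W_{-\alpha}} a(w,v)/\|v\|_{1,-\alpha,\Omega}\gtrsim \|w\|_{1,\alpha,\Omega}$. Invoking the weighted Poincar\'e inequality for the $A_2$ weight $d^{2\alpha}$, valid in this range of $\alpha$ (cf.~\cite{Fabes1982a,Nochetto2016}), the full norms may be replaced by the seminorms $\|\nabla w\|_{0,\alpha,\Omega}$ and $\|\nabla v\|_{0,-\alpha,\Omega}$, which is the continuous version of the first inf--sup. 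By Lemma~\ref{lem:Wh-in-Walpha} the space $W_h^\ell$ lies inside both $W_\alpha$ and $W_{-\alpha}$, so the discrete inequalities are meaningful, and the only genuine task is to transfer the continuous inf--sup to $W_h^\ell$ \emph{uniformly in $h$}.

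This transfer is the main obstacle, and the usual shortcut is illusory here: a Fortin operator $\Pi_h\colon W_{-\alpha}\to W_h^\ell$ that is weighted--stable and satisfies $a(u_h,v-\Pi_h v)=0$ for all $u_h\in W_h^\ell$ is, by testing that orthogonality against all of $W_h^\ell$, forced to coincide with the $a$--orthogonal (Ritz) projection, so the requirement reduces to the weighted stability $\|\nabla R_h v\|_{0,-\alpha,\Omega}\lesssim\|\nabla v\|_{0,-\alpha,\Omega}$ of the Galerkin projection, which is equivalent to the inf--sup itself. I would therefore prove this weighted stability directly, by a Schatz--Wahlbin type localisation adapted to Muckenhoupt weights: decompose $\Omega$ into dyadic annuli $\Omega_j=\{2^{-j-1}\le d\le 2^{-j}\}$ on which $d^{2\alpha}$ is comparable to the constant $2^{-2\alpha j}$, apply the standard unweighted local energy estimates for the Galerkin projection on each $\Omega_j$ together with the interior/pollution estimates coupling neighbouring annuli, and finally sum the local bounds against the weight. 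The $A_2$ property~\eqref{eq:muckencazz} controls the weighted averages on each patch, the mesh--interface assumption~\eqref{eq:condition-on-K-Gamma} bounds the contribution of the $O(h^{-(n-1)})$ cells that actually meet $\Gamma$ (where $d$ is \emph{not} comparable to a constant), and the restriction $|\alpha|<\tfrac12$ is precisely what makes the geometric series in the coupling constants converge. The delicate point is to track the pollution terms so that the coupling constant stays strictly below the decay rate $2^{-2\alpha}$; this is where the bulk of the work lies, and where I would lean on the weighted quasi--interpolation estimates of~\cite{Nochetto2016} and on the arguments of~\cite{Drelichman2018,Dangelo2012}.

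With continuity constant $1$ and the two discrete inf--sup constants $C_1,C_2>0$ uniform in $h$ in hand, the conclusion is routine. The Banach--Ne\v{c}as--Babu\v{s}ka theorem yields existence and uniqueness of $u_h$, and, using Galerkin orthogonality $a(u-u_h,v_h)=0$ (valid since Problem~\ref{pb:discrete} and Problem~\ref{pb:weighted} share the same form and right--hand side and $W_h^\ell\subset W_{-\alpha}$) together with the fact that the exact solution lies in $W_\alpha$, it gives $\|u-u_h\|_{1,\alpha,\Omega}\le (1+1/C_1)\inf_{v_h\in W_h^\ell}\|u-v_h\|_{1,\alpha,\Omega}$, which is~\eqref{eq:optimal_error_estimate} with $C_3=1+1/C_1$.
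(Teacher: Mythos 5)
Your reduction steps are all correct, and they coincide with what the paper compresses into ``standard results for Petrov--Galerkin approximations'': the continuity constant $1$ from the splitting $\nabla u\cdot\nabla v=(d^{\alpha}\nabla u)\cdot(d^{-\alpha}\nabla v)$ and Cauchy--Schwarz, the observation that by symmetry of the form the second inf--sup at weight $\alpha$ is the first at weight $-\alpha$ (legitimate precisely because the claim covers all $|\alpha|<\tfrac12$), Galerkin orthogonality from $W_h^\ell\subset W_{-\alpha}$ (Lemma~\ref{lem:Wh-in-Walpha}), and the Banach--Ne\v{c}as--Babu\v{s}ka conclusion with $C_3=1+1/C_1$. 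You also correctly flag a point the paper glosses over: the inf--sup conditions are stated in gradient seminorms while \eqref{eq:optimal_error_estimate} is in the full $\|\cdot\|_{1,\alpha,\Omega}$ norm, so a weighted Poincar\'e inequality for the $A_2$ weight $d^{2\alpha}$ is genuinely needed to pass between the two. Your Fortin-operator remark is likewise sound: for this coercive-form setting the Fortin condition forces the operator to be the Ritz projection, so no shortcut around weighted Ritz stability exists.

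The divergence, and the gap, is at the central step. The paper does not prove the discrete inf--sup conditions at all: its entire proof is the citation of \cite[Theorem 3.4]{Drelichman2018} with $k=n-1$, $p=2$, and $\lambda=2\alpha$. You instead propose to establish the equivalent weighted stability of the Galerkin projection directly, by a Schatz--Wahlbin localisation over dyadic annuli $\{2^{-j-1}\le d\le 2^{-j}\}$, summing local energy estimates against the weight and using \eqref{eq:muckencazz} and \eqref{eq:condition-on-K-Gamma} to control the $O(h^{-(n-1)})$ cells meeting $\Gamma$. This is a plausible program, roughly in the spirit of how such results are obtained in the literature, but you do not execute it: the uniform-in-$h$ tracking of the pollution terms so that the inter-annulus coupling constant beats the decay rate $2^{-2\alpha}$ --- which you yourself identify as ``where the bulk of the work lies'' --- is exactly the content of the cited theorem, not a routine verification. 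So as a standalone argument your proof has a genuine hole at its load-bearing step; as a proof modulo the literature it is fine, since the statement you need is literally \cite[Theorem 3.4]{Drelichman2018} (itself building on techniques close to \cite{Dangelo2012} and weighted interpolation theory as in \cite{Nochetto2016}), and the economical fix is to cite it, as the paper does, rather than re-derive it.
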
 
\begin{proof}
  The two inf-sup conditions follow from~\cite[Theorem
  3.4]{Drelichman2018} setting $k=n-1$, and $p=2$, and observing that
  $2\alpha = \lambda$. Estimate~\ref{eq:optimal_error_estimate}
  follows using standard results for Petrov-Galerkin approximations.
\end{proof}

From the point of view of the implementation,
Problem~\ref{pb:discrete} is a standard finite element problem. The
only difficulty is given by the integration of the test functions on
the surface $\Gamma$, which is not aligned with the grid where $v_h$
are defined. This is usually done by some quadrature formulas, where
the integration is performed approximately using a fixed number of
points on $\Gamma$ (see, e.g.,~\cite{Heltai2012b}). It is possible to
choose a quadrature formula such that the error induced by the
numerical approximation of the integral over $\Gamma$ is of higher
order with respect to the overall order of accuracy, by making sure
that the integration on $\Gamma$ is performed by splitting the curve
or surface at the boundaries of the elements of the triangulation used
for $\Omega$, and by using enough quadrature points.

%

\subsection{Interpolation estimates in weighted Sobolev spaces}%

The estimate provided in Theorem~\ref{theo:stability} does not exploit
the particular structure of our singular forcing term. We adapt the
construction of the interpolation operator from $W_\alpha$ to
$W_h^\ell$ presented in~\cite{Dangelo2012} to the co-dimensional one
case, and show that optimal error convergence rates can be achieved
when the singular forcing term has the form introduced in this paper.

For each element $K \in \Omega_h$ we define the quantities
\[
{d}_K:=\text{dist}(K,\Gamma), \qquad \overline{d}_K:=\max_{x\in K} \text{dist}(K,\Gamma), \qquad h_K:=\text{diam}(K).
\]

We assume that the mesh is quasi-uniform, i.e., 
 there exist two positive constants $c$ and $C$ such that
\begin{equation}
\label{eq:scaling_of_hK}
\begin{split}
&c h\leq h_K\leq C h.
\end{split}
\end{equation}
We split the mesh in two parts, one containing all 
elements close to $\Gamma$, that is 
\[
\Omega_h^{\text{in}}:= \{K\in \Omega_h \text{ such that } \bar d_K\leq \sigma h
 \},
\]
where $\sigma$ is a fixed safety coefficient, and the other, $\Omega_h^{\text{out}} := \Omega_h \setminus \Omega_h^{\text{in}} $.

\begin{figure}\centering
  \setlength{\unitlength}{0.7\textwidth}
  \begin{picture}(1,.5)
    \put(0,0){\includegraphics[width=0.7\textwidth]{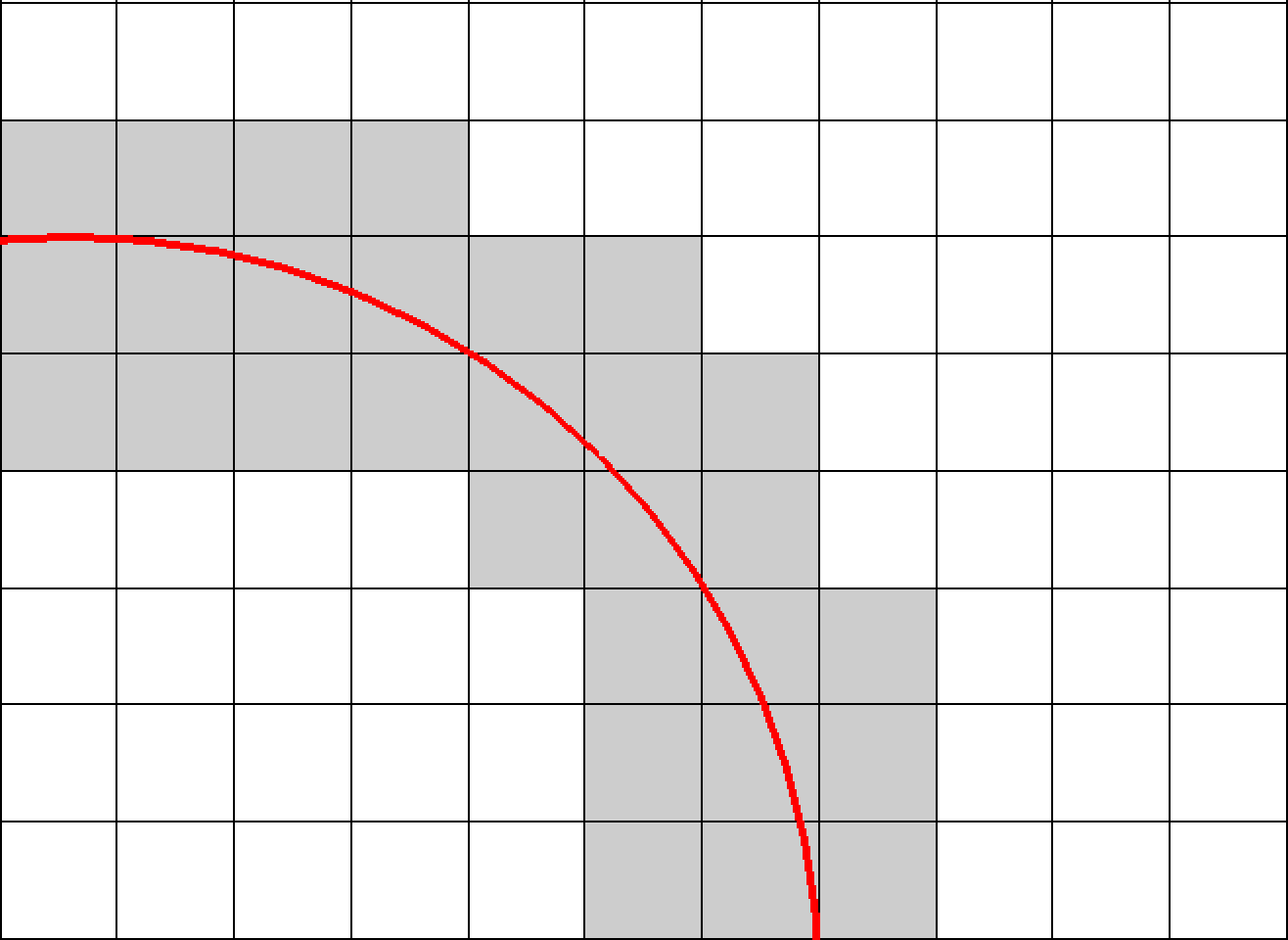}}
    \put(.285,.3){$\Omega_h^{\text{out}}$}
    \put(.745,.48){$\Omega_h^{\text{out}}$}
    \put(.47,.21){$\Omega_h^{\text{in}}$}
    \put(.3,.5){$\Gamma$}
  \end{picture} 
  \caption{The splitting of the computational mesh in $\Omega_h^{\text{in}}$ (grey elements in the picture) and $\Omega_h^{\text{out}}$ (white elements in the picture).}
\end{figure}

It can be shown that
\begin{equation}
\label{eq:mesh-properties}
  \begin{split}
  &d_K\lesssim h_K, \quad
  h_K\lesssim \overline{d}_K \lesssim h_K, \quad \forall K \in \Omega_h^{\text{in}},
  \\
&\overline{d}_K \lesssim d_K, \qquad \quad \forall K \in 
 \Omega_h^{\text{out}},
 \end{split}
  \end{equation}
where the notation $a\lesssim b$ is used to indicate that there exists a constant $C>0$ such that $a \leq C b$. 

We define the folloing discrete norm
\begin{equation}
\label{eq:discrete-norm}
\| u_h \|^2_{h,\alpha}:=\sum_{K\in \Omega_h}(\overline{d}_K)^{2\alpha}\| u_h \|^2_{0,K}.
\end{equation}

\begin{lemma}
Let $|\alpha|< t$, $t\in [0,\frac12)$, then 
 the following norms are equivalent
\begin{equation}
\label{eq:equivalent-norms}
\| u_h \|_{h,\alpha}\lesssim \| u_h \|_{0,\alpha,\Omega}
\lesssim \| u_h \|_{h,\alpha}, \qquad \forall u_h \in W_h^r
\end{equation}
 where the constants of the inequalities depend only on $t$.
\end{lemma}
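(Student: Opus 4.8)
Both quantities are sums of local contributions: by definition $\|u_h\|_{0,\alpha,\Omega}^2 = \sum_{K\in\Omega_h}\int_K u_h^2\, d^{2\alpha}\,\d x$, while $\|u_h\|_{h,\alpha}^2 = \sum_{K\in\Omega_h}(\overline d_K)^{2\alpha}\,\|u_h\|_{0,K}^2$. It therefore suffices to prove the two-sided bound
\begin{equation*}
\int_K u_h^2\, d^{2\alpha}\,\d x \lesssim (\overline d_K)^{2\alpha}\int_K u_h^2\,\d x \lesssim \int_K u_h^2\, d^{2\alpha}\,\d x, \qquad \forall K \in \Omega_h,
\end{equation*}
with constants independent of $K$ and of $u_h$, depending only on the bound $t$ on $|\alpha|$; summing over $K$ then yields \eqref{eq:equivalent-norms}. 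I split the argument according to the partition $\Omega_h = \Omega_h^{\text{in}}\cup\Omega_h^{\text{out}}$.

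\textbf{Outer elements.} On $K \in \Omega_h^{\text{out}}$ the surface $\Gamma$ is far from $K$ relative to its diameter: by the mesh properties \eqref{eq:mesh-properties} one has $\overline d_K \lesssim d_K$, and trivially $d_K \le d(x) \le \overline d_K$ for every $x \in K$. Hence $d(x)$ and $\overline d_K$ are comparable pointwise, with a ratio bounded uniformly in $K$; raising to the power $2\alpha$ (whose modulus is at most $2t < 1$) preserves the comparison, so $d(x)^{2\alpha} \simeq (\overline d_K)^{2\alpha}$ uniformly on $K$. Integrating $u_h^2$ against this equivalence yields the elementwise bound on outer elements directly, for either sign of $\alpha$.

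\textbf{Inner elements (the main difficulty).} On $K \in \Omega_h^{\text{in}}$ the distance $d$ vanishes (or is arbitrarily small) on $K \cap \Gamma$, while $\overline d_K \simeq h_K$ by \eqref{eq:mesh-properties}; the pointwise comparison above breaks down, and the estimate must instead exploit that $u_h|_K$ lies in the finite-dimensional space $\mathcal{Q}^\ell(K)$. Following \cite{Dangelo2012}, I map $K$ onto a reference cell $\hat K$ of unit size through an affine transformation $F_K$ whose linear part has norm $\simeq h_K$ by shape regularity. Setting $\hat u := u_h\circ F_K \in \mathcal{Q}^\ell(\hat K)$ and $\hat d(\hat x) := d(F_K\hat x)/h_K$, the change of variables reduces the desired bound to
\begin{equation*}
\int_{\hat K}\hat u^2\,\hat d^{\,2\alpha}\,\d\hat x \;\simeq\; \int_{\hat K}\hat u^2\,\d\hat x,
\end{equation*}
since $(\overline d_K)^{2\alpha}\simeq h_K^{2\alpha}$ and the Jacobian cancels between the two sides. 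On the fixed finite-dimensional space $\mathcal{Q}^\ell(\hat K)$ both members are squared norms — the weighted one is nondegenerate because $\hat d^{\,2\alpha}$ is positive a.e.\ and integrable together with its inverse — so they are equivalent for each individual weight.

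\textbf{Uniformity of the constants.} The crux is that this equivalence constant on $\hat K$ must be \emph{uniform} over all inner cells, which are cut by $\Gamma$ in uncontrolled positions and orientations. This is where the Muckenhoupt structure \eqref{eq:muckencazz} is decisive: the weight $x\mapsto d(x)^{2\alpha}$ is an $A_2$ weight whose $A_2$ constant depends only on $\alpha$, and that constant is invariant under the dilation encoded in $F_K$. Consequently the rescaled weights $\hat d^{\,2\alpha}$ form a family of $A_2$ weights on $\hat K$ sharing a common $A_2$ bound, with $\int_{\hat K}\hat d^{\,2\alpha}$ and $\int_{\hat K}\hat d^{-2\alpha}$ bounded above and below uniformly; the geometric hypotheses \eqref{eq:condition-on-K-Gamma} and shape regularity confine the rescaled interfaces $F_K^{-1}(\Gamma)\cap\hat K$ to a compact family of configurations. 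A scaling and compactness argument then produces a single pair of constants, depending only on $t$, valid for every inner element. Combining the inner and outer elementwise estimates and summing over $K$ completes the proof.
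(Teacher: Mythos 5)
Your proposal follows the paper's skeleton (which is D'Angelo's Lemma~3.2 adapted to codimension one) up to exactly the point you yourself call the crux: the reduction to elementwise bounds, the outer-element argument via $\overline d_K \lesssim d_K \le d(x) \le \overline d_K$ from \eqref{eq:mesh-properties}, and the affine scaling of inner elements to $\hat K$ all match. But at the crux there is a genuine gap. First, the Muckenhoupt bound \eqref{eq:muckencazz} controls ball averages of $w$ and $w^{-1}$; it does not by itself yield a lower bound for $\inf\bigl\{\int_{\hat K}\hat u^2\,\hat d^{\,2\alpha} : \hat u\in\mathcal{Q}^\ell(\hat K),\ \|\hat u\|_{0,0,\hat K}=1\bigr\}$ that is uniform over the family of cut weights. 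Nondegeneracy for each \emph{fixed} weight (all norms on a finite-dimensional space are equivalent) does not self-improve to uniform nondegeneracy over an infinite family — uniformity is precisely the content of the lemma, so invoking per-weight equivalence here is circular. Second, the "compact family of configurations" is asserted, never established: you specify no topology in which the rescaled pieces $F_K^{-1}(\Gamma)\cap\hat K$ of a merely Lipschitz $\Gamma$, over all $h$ and all $K\in\Omega_h^{\text{in}}$, are compact \emph{and} in which the equivalence constant is continuous. Making that rigorous would need, e.g., Blaschke selection in the Hausdorff metric, continuity of $\hat d$ with respect to the set, and a verification that limit configurations still have measure zero (otherwise the limiting weighted norm degenerates for $\alpha>0$); none of this is in the proposal, and note also that your own intermediate claim that $\int_{\hat K}\hat d^{\,-2|\alpha|}$ is uniformly bounded already presupposes a uniform geometric estimate that $A_2$ membership alone does not supply.

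The paper closes this gap with an elementary measure argument that needs neither $A_2$ nor compactness, and which you could substitute directly. Set $\hat K_\Delta := \{\hat x\in\hat K : \hat d(\hat x) > \Delta\}$. Since $\Gamma$ has codimension one with area per cell controlled by \eqref{eq:condition-on-K-Gamma}, the tube bound $|\hat K|-|\hat K_\Delta| \lesssim \Delta$ holds \emph{irrespective of where $\hat\Gamma_K$ cuts $\hat K$}; hence $\int_{\hat K}\hat d^{\,2\alpha}\hat u^2 \ge \Delta^{2\alpha}\|\hat u\|^2_{0,0,\hat K_\Delta} \ge \Delta^{2t}\|\hat u\|^2_{0,0,\hat K_\Delta}$ for $0\le\alpha\le t$ and $\Delta\le 1$. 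The finite-dimensional inequality $\|\hat u\|^2_{0,0,\hat K_\Delta} \ge c_\Delta\,\|\hat u\|^2_{0,0,\hat K}$ then holds with $c_\Delta$ depending only on the \emph{measure} of the removed set, not on its shape or position: for $\|\hat u\|_{0,0,\hat K}=1$ one has $\|\hat u\|_{L^\infty(\hat K)}\lesssim 1$ by equivalence of norms on $\mathcal{Q}^\ell(\hat K)$, so $\int_{\hat K\setminus\hat K_\Delta}\hat u^2 \lesssim \Delta$, and choosing $\Delta$ small fixed suffices. This is where both the uniformity over cells and the dependence of the constants on $t$ alone (through $\Delta^{2\alpha}\ge\Delta^{2t}$) actually come from; it is exactly the step your sketch replaces with the unproven compactness claim. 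The same tube bound, combined with $\|\hat u\|_{L^\infty}\lesssim\|\hat u\|_{0,0,\hat K}$ and a layer-cake estimate for $\int_{\hat K}\hat d^{\,-2|\alpha|}$, also handles the case $\alpha<0$ (where the hard direction is the upper bound on the weighted integral), so the compactness scaffolding is not only incomplete but unnecessary.
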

\begin{proof}
The proof follows very closely~\cite[Lemma 3.2]{Dangelo2012}. We
consider $\alpha \geq 0$ (the other case follows similarly).
Let $K\in \Omega_h$ and $x\in K$; we have $d(x)^{2\alpha} \leq
(\overline{d}_K)^{2\alpha}$,  so that the first part of the inequality
$\|u_h\|_{0,\alpha,\Omega}  \leq
\|u_h\|_{h,\alpha}$ follows trivially.

The second part of the inequality follows by distinguishing two cases: If $K
\in \Ohout$, we use Equation~\eqref{eq:mesh-properties}, and we have
$\overline{d}_K^{2\alpha} \| u_h \|^2_{0,0,K} \lesssim d^{2\alpha}_K
\| u_h \|^2_{0,0,K} \leq \| d^{\alpha} u_h \|^2_{0,0,K}$. 

Following~\cite[Lemma 3.2]{Dangelo2012}, we show that a
similar estimate holds true if $K \in \Ohin$. Let $\hat
K$ be the reference element and let $F_K : \hat K \to K$ be the affine
transformation, mapping $\hat K$  onto the actual element $K$. Let
$\hat u_h = u_h  \circ F_K$, and let $\hat \Gamma_K = F_K^{-1}(\Gamma)$, such that
$\Gamma$ is the $K$ image of $\hat \Gamma_K$ under $F_K$, and let $\hat
d(\hat x) = \text{dist}(\hat x, \hat \Gamma_K)$.

Thanks to shape regularity, the eigenvalues of the Jacobian
matrix of $F_K$ are uniformly upper and lower bounded by $h_K$. Hence,
distances are transformed according to $d(F_K(\hat x)) \gtrsim h_K
\hat d(\hat x)$. As a result, 
\begin{equation*}
  \|d^\alpha u_h \|^{2}_{0,0,K} = \int_K d^{2\alpha}u_h^2 =
  \frac{|K|}{|\hat K|} \int_{\hat K} [ d(F_K(\hat x))]^{2\alpha} \hat
  u_h^2 \gtrsim h_K^{2\alpha} \frac{|K|}{|\hat K|}  \int_{\hat K}  \hat d^{2\alpha} \hat
  u_h^2.
\end{equation*}

\begin{figure}
  \centering
  \includegraphics[]{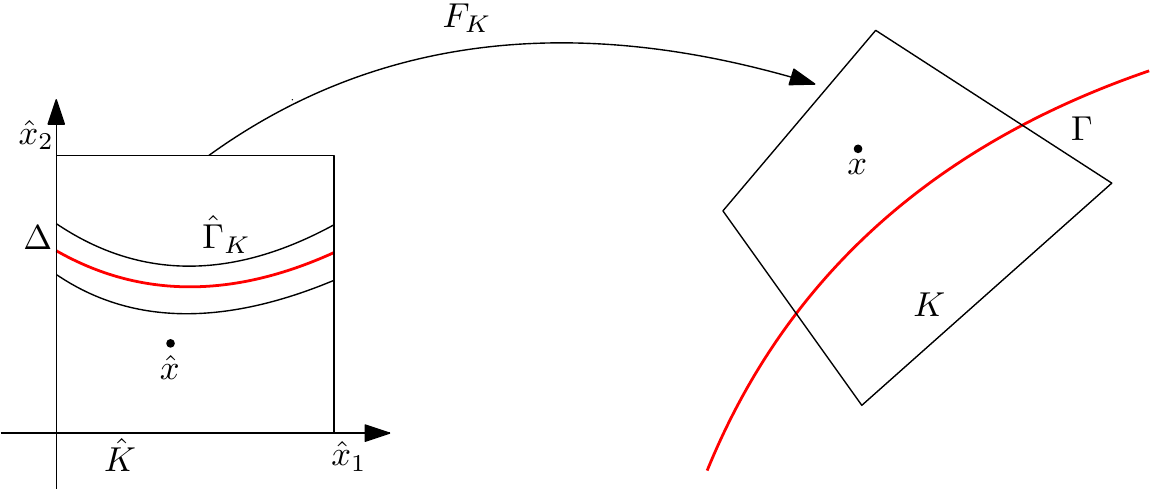}
  \caption{Reference element $\hat K$, its transformation to the real
    element $K$ and visualisation of $\Gamma$, $\hat \Gamma$, and $\Delta$.}
  \label{fig:delta-definition}
\end{figure}

Let us introduce the subset $\hat K_\Delta = \{\hat x \in \hat K :
\text{dist}(\hat x, \hat \Gamma_K) > \Delta \}$, where $\Delta > 0$ is a
parameter (see Figure~\ref{fig:delta-definition}); we have 
\begin{equation*}
  \Delta^{2\alpha} \| \hat u_h \|^2_{0,0,\hat K_\Delta} \leq
  \int_{\hat K} \hat d^{2\alpha} \hat u_h^2.
\end{equation*}
Note that for $\Delta$ small, $\inf_K |\hat K_\Delta |$ cannot
degenerate with respect to $|\hat K | \gtrsim 1$.

We can estimate $|\hat K|-|\hat K_\Delta | \lesssim \Delta$,
irrespective of the position of $\hat \Gamma_K$; hence, choosing
$\Delta$ sufficiently small, we have
$| \hat K_\Delta| \geq c' \Delta |\hat K |$, where the constant
$c' = 1 -O(\Delta)$ depends on $\Delta$ but not on $K\in
\Ohin$. 

Notice that, when the surface is of co-dimension one, as in our case,
the constant $c'$ is of order $O(\Delta)$ instead of order
$)(\Delta^2)$ as in the case presented by~\cite{Dangelo2012}.

Similarly, it can be seen that
$\| \hat u_h\|^2_{0,0,\hat K_\Delta} \geq c_\Delta \| \hat{u}_h
\|^2_{0,0,\hat{K}} \quad \forall \hat{u}_h \in \mathcal{Q}^\ell(\hat K)$ (it
suffices to see that the estimates hold for the local basis functions
on $\hat K$), where again the constant $c_\Delta$ depends only on
$\Delta$ and not on the shape of $\hat K_\Delta$. Hence, using $\alpha
\leq t $ and Equation~\eqref{eq:mesh-properties}, we conclude
\begin{multline*}
  \| d^\alpha u_h\|^2_{0,0,K} \gtrsim c_\Delta \Delta^{2\alpha}
  h_K^{2\alpha}  \frac{|K|}{|\hat K|}  \|\hat u_h\|^2_{0,0,\hat K} \\
  \geq c_\Delta \Delta^{2 t}
  h_K^{2\alpha}  \frac{|K|}{|\hat K|}  \|\hat u_h\|^2_{0,0,\hat K}
 \gtrsim c_\Delta \Delta^{2 t}
  (\overline d_K)^{2\alpha} \|u_h\|^2_{0,0,K}.
\end{multline*}

\end{proof}

\begin{lemma} Given $u\in H^s(\Omega)$,  with  $0 \leq m \leq s\leq \ell$,
  $\alpha \in \left[0,\frac12\right)$, we have that
  \label{lem:weighted inequality}
  \begin{equation}
    \label{eq:weighted inequality}
    |u|_{m, \alpha, K} \lesssim \overline{d}_K ^{\ \alpha} h_K^{(s-m)} |u|_{s,0,K}, \qquad \forall K \in \Omega_h.
  \end{equation}
\end{lemma}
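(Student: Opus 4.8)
The plan is to prove \eqref{eq:weighted inequality} one element at a time, peeling off the weight from the differential content before reducing to a dimensionless estimate on the reference cell.

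\emph{Step 1 (extracting the weight).} Fix $K\in\Omega_h$ and a multi-index $\gamma$ with $|\gamma|=m$. Since $\alpha\ge0$ and $d(x)\le\overline d_K$ for every $x\in K$, the weight is bounded pointwise by its elementwise maximum, so
\begin{equation*}
\|D^\gamma u\|_{0,\alpha,K}^2=\int_K d(x)^{2\alpha}\,|D^\gamma u|^2\,\d x\le \overline d_K^{\,2\alpha}\int_K|D^\gamma u|^2\,\d x=\overline d_K^{\,2\alpha}\,\|D^\gamma u\|_{0,0,K}^2 .
\end{equation*}
Summing over the multi-indices with $|\gamma|=m$ gives $|u|_{m,\alpha,K}\le \overline d_K^{\,\alpha}\,|u|_{m,0,K}$, and it remains to establish the purely unweighted scaling bound $|u|_{m,0,K}\lesssim h_K^{\,s-m}\,|u|_{s,0,K}$, which carries the entire $h_K^{\,s-m}$ gain.

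\emph{Step 2 (reduction to the reference cell).} I would pull back through the affine map $F_K:\hat K\to K$ introduced above and set $\hat u=u\circ F_K$. Shape regularity bounds the eigenvalues of the Jacobian of $F_K$ above and below by multiples of $h_K$ and gives $|\det DF_K|\approx h_K^{\,n}$; the chain rule then yields, for each integer $j$,
\begin{equation*}
|u|_{j,0,K}\approx h_K^{\,n/2-j}\,|\hat u|_{j,0,\hat K},
\end{equation*}
with constants depending only on $n$, $j$ and the shape-regularity constant. Inserting this for $j=m$ and $j=s$, the common factor $h_K^{\,n/2-m}$ cancels on the two sides and the claim collapses to the dimensionless inequality $|\hat u|_{m,0,\hat K}\lesssim|\hat u|_{s,0,\hat K}$ on the fixed cell $\hat K$.

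\emph{Step 3 (reference-cell estimate, and the main obstacle).} The remaining dimensionless bound is where I expect the real difficulty to lie, and it is the heart of the lemma: it is furnished by the Bramble--Hilbert/Deny--Lions machinery, through which the top-order seminorm $|\hat u|_{s,0,\hat K}$, which annihilates $\mathcal P_{s-1}$, controls the $m$-th order seminorm $|\hat u|_{m,0,\hat K}$ by means of an averaged Taylor polynomial argument on $\hat K$, with a constant depending only on $\hat K$, $n$, $m$ and $s$. Tracking this polynomial-approximation content on the fixed cell is exactly the delicate point, since the gain $h_K^{\,s-m}$ originates there once rescaled. Assembling Steps 1--3 and invoking quasi-uniformity $h_K\approx h$ from \eqref{eq:scaling_of_hK} to absorb the mesh constants, I recover \eqref{eq:weighted inequality} with a constant independent of $K$ and $h$ but depending on $\alpha$, $\ell$, $n$ and the shape-regularity constant.
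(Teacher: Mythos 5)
Your Steps 1 and 2 coincide with the paper's own proof: the paper likewise peels off the weight via $d(x)\le\overline{d}_K$ on $K$, and then invokes, without further detail, ``a standard scaling argument'' for the unweighted bound $|u|_{m,0,K}\lesssim h_K^{s-m}|u|_{s,0,K}$. The genuine gap is your Step 3. The Bramble--Hilbert/Deny--Lions machinery bounds sublinear functionals that \emph{vanish on} $\mathcal{P}_{s-1}$ by the seminorm $|\cdot|_{s,0,\hat K}$; the functional $\hat u\mapsto|\hat u|_{m,0,\hat K}$ vanishes only on $\mathcal{P}_{m-1}$, so for $m<s$ it is not eligible, and indeed the dimensionless inequality $|\hat u|_{m,0,\hat K}\lesssim|\hat u|_{s,0,\hat K}$ is false: take $\hat u\equiv 1$ with $m=0$, so the left side is positive while $|\hat u|_{s,0,\hat K}=0$; more generally any $p\in\mathcal{P}_{s-1}$ with a nonvanishing derivative of order $m$ serves as a counterexample. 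Scaling back, the same functions defeat the unweighted estimate of Step 2 and, in fact, the lemma exactly as stated: for $u\equiv 1$ the left side of \eqref{eq:weighted inequality} equals $\|d^{\alpha}\|_{0,0,K}>0$ while the right side vanishes whenever $s>m$. What Bramble--Hilbert actually furnishes is the quotient version
\begin{equation*}
\inf_{p\in\mathcal{P}_{s-1}}|\hat u-p|_{m,0,\hat K}\lesssim|\hat u|_{s,0,\hat K},
\qquad\text{hence}\qquad
\inf_{p\in\mathcal{P}_{s-1}}|u-p|_{m,0,K}\lesssim h_K^{\,s-m}\,|u|_{s,0,K},
\end{equation*}
which is the strongest statement your Steps 2--3 can deliver.

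To be fair, the paper's proof asserts the identical unweighted inequality with no more justification than you give, so your proposal faithfully reproduces the paper's route --- and your attempt to supply the missing justification is precisely what exposes where that route needs repair: the $h_K^{s-m}$ gain is available only modulo $\mathcal{P}_{s-1}$, i.e.\ for functions whose polynomial content has been annihilated (interpolation errors such as $u-\Pi_h u$, to which the estimate is effectively applied on $\Omega_h^{\text{out}}$ in Theorem~\ref{theo:main-estimate}), or with the lower-order terms controlled separately, e.g.\ through the Kondratiev norms $|||\cdot|||$ when the bound is applied to $u$ itself on the thin band $\Omega_h^{\text{in}}$. Two smaller points: in the downstream application the order $\frac32-s$ is fractional, so the scaling identity of your Step 2 must be run for Sobolev--Slobodeckij seminorms (it holds with the same exponents, but the multi-index summation of Step 1 makes literal sense only for integer orders); and the appeal to quasi-uniformity \eqref{eq:scaling_of_hK} at the end is unnecessary, since the claimed constant is local to $K$ and only shape regularity enters.
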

\begin{proof}
  By the definition of the weighted norm, we have that
  \begin{equation*}
    \| u \|_{0,\alpha,K}  = \| u d^\alpha \|_{0,0,K} \lesssim  \bar d_K^{\ \alpha}\| u \|_{0,0,K},
  \end{equation*}
  and similarly for the higher order semi-norms.

  A standard scaling argument, on the other hand, implies
  \begin{equation*}
    | u |_{m,0, K} \lesssim  h_K^{s-m} | u |_{s,0,K},
  \end{equation*}
  and the thesis follows by a combination of the two inequalities.
\end{proof}

\subsection{Convergence rate of the finite element approximation}
\label{sec:convergence rate}

We consider the nodal points $x_j$ of the basis functions $\phi_i$ that span the space $W^\ell_h$, i.e., $W^\ell_h := \text{span}\{\phi_i\}_{i=1}^{N}$, and $\phi_i(x_j) = \delta_{ij}$, where $\delta$ is the Kronecher delta.
We define the interpolation operator 
\begin{equation}
\Pi_h: H^{k+1}(\Omega\setminus\Gamma)\cap V^\ell_\epsilon(\Omega) \to W^\ell_h,
\label{eq:domain of Pi_h}
\end{equation}
as the operator that coincides with the standard finite element interpolation operator in $\Omega_h^{\text{out}}$ (see, for example, \cite{ciarlet78}), and that sets the degrees of freedom whose support belongs to $\Omega^{\text{in}}_h$ to zero, i.e.:
\begin{equation}
  \label{eq:definition pi h}
  \Pi_h u := \sum_{i \text{ s.t. } x_i \in \Ohout} u(x_i) \phi_i.
\end{equation}

Different interpolation operators could be defined for more general weighted Sobolev spaces, as in~\cite{Nochetto2016}. In this work we provide a generalisation of a result in \cite[section 3.3]{Dangelo2012}, for co-dimension one surfaces.
\begin{theo}[Properties of $\Pi_h$]
  \label{theo:properties of pi h}
  Let $\ell \leq k+1$ be such that the embedding $H^\ell(\Omega) \hookrightarrow L^\infty(\Omega)$ is continuous. For $\alpha \geq \epsilon$, $m \leq \ell$, the operator $\Pi_h$
  satisfies the following inequalities:
  \begin{equation}
    \label{eq:property of Pi_h}
    \begin{aligned}
      | u - \Pi_h u|_{m, 0, K} & \lesssim h_K^{k+1-m} |u|_{k+1,0,K}  \qquad && \text{ if } K \in \Ohout, \\
      | \Pi_h u |_{m, \alpha, K} & \lesssim h_k^{\ell-m +
        \alpha-\epsilon} ||| u |||_{\ell,\epsilon,\Delta_K} \qquad &&
      \text{ if } K \in \Ohin,
    \end{aligned}
  \end{equation}
where $\Delta_K$ is the set of all elements $K'$ in $\Ohout$ that share at least a node with $K$, i.e., 
\begin{equation*}
  \label{eq:delta K}
  \Delta_K := \{ K' \in \Ohout : \overline{K'}\cap\overline{K} \neq \emptyset \}.
\end{equation*}
\end{theo}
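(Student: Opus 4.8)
The plan is to handle the two mesh regions separately, exploiting the fact that on $\Omega_h^{\text{out}}$ the operator $\Pi_h$ is just the classical Lagrange interpolant, while on $\Omega_h^{\text{in}}$ it is the polynomial determined \emph{only} by the nodal values at the out-nodes touching $K$.

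For $K \in \Omega_h^{\text{out}}$ I would first note that every node of $K$ lies in $\overline{\Omega_h^{\text{out}}}$, so no degree of freedom supported on $K$ is zeroed out and $\Pi_h u|_K$ coincides with the standard finite element interpolant on $K$. The first inequality is then the classical interpolation estimate~\cite{ciarlet78}; it is legitimate because the hypothesis $H^\ell \hookrightarrow L^\infty$ together with $\ell \le k+1$ makes the point values of $u \in H^{k+1}(\Omega\setminus\Gamma)$ well defined on out-elements, where $u$ stays away from $\Gamma$.

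The substance of the statement is the second inequality. For $K \in \Omega_h^{\text{in}}$, $\Pi_h u|_K \in \mathcal Q^\ell(K)$ has as its only nonzero nodal coefficients the values $u(x_i)$ at the out-nodes $x_i \in \overline K \cap \overline{\Omega_h^{\text{out}}}$, all of which belong to $\Delta_K$. I would proceed in three scalings. First, since $d \le \overline d_K$ on $K$ and $\Pi_h u$ is a finite element function, Lemma~\ref{lem:weighted inequality} (with $s=m$) and a standard inverse estimate give $|\Pi_h u|_{m,\alpha,K} \lesssim \overline d_K^{\,\alpha}|\Pi_h u|_{m,0,K}\lesssim \overline d_K^{\,\alpha} h_K^{-m}\|\Pi_h u\|_{0,0,K}$. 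Second, mapping to the reference element and using equivalence of norms on the finite-dimensional reference space bounds the polynomial by its coefficients, $\|\Pi_h u\|_{0,0,K}\lesssim h_K^{n/2}\max_i|u(x_i)|$. Third, I bound the nodal values by the weighted Kondratiev norm on the patch.

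The last step is the main obstacle. Here I use that $\Delta_K \subset \Omega_h^{\text{out}}$ is made of $O(1)$ elements of diameter $\sim h_K$ on which the mesh estimates~\eqref{eq:mesh-properties} force $d \sim h_K$ uniformly; this comparability is exactly where the co-dimension one geometry and the band width $\sigma h$ enter. Mapping $\Delta_K$ to a unit reference patch $\hat\omega$, the continuous embedding $H^\ell(\hat\omega)\hookrightarrow L^\infty(\hat\omega)$ yields $|u(x_i)| = |\hat u(\hat x_i)| \lesssim \|\hat u\|_{H^\ell(\hat\omega)}$. Rescaling each seminorm, $|\hat u|_{j,0,\hat\omega}^2 = h_K^{2j-n}|u|_{j,0,\Delta_K}^2$, and replacing the unweighted seminorm by the weighted one through $d \sim h_K$, i.e. $|u|_{j,0,\Delta_K}\sim h_K^{\ell-\epsilon-j}|u|_{j,\epsilon-\ell+j,\Delta_K}$, the sum collapses to $\|\hat u\|_{H^\ell(\hat\omega)}\lesssim h_K^{\ell-\epsilon-n/2}|||u|||_{\ell,\epsilon,\Delta_K}$. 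Finally, using $\overline d_K\sim h_K$ for in-elements to turn $\overline d_K^{\,\alpha}$ into $h_K^{\alpha}$ and chaining the three bounds gives
\[
|\Pi_h u|_{m,\alpha,K}\lesssim h_K^{\alpha}\,h_K^{-m}\,h_K^{n/2}\,h_K^{\ell-\epsilon-n/2}\,|||u|||_{\ell,\epsilon,\Delta_K}=h_K^{\ell-m+\alpha-\epsilon}\,|||u|||_{\ell,\epsilon,\Delta_K},
\]
which is the claim. The delicate points are the uniform equivalence $d\sim h_K$ on $\Delta_K$ and the bookkeeping of the weight exponents so that the three scalings combine to exactly $\ell-m+\alpha-\epsilon$; the hypothesis $\alpha\ge\epsilon$ ensures the weight can be extracted in the first step and keeps the argument aligned with D'Angelo's construction in~\cite[section 3.3]{Dangelo2012}, which I follow closely.
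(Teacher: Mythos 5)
Your proof is correct and takes essentially the same route as the paper's: both treat out-elements by the classical interpolation estimate, and on in-elements both reduce to bounding the surviving nodal values via the reference-configuration embedding $H^\ell\hookrightarrow L^\infty$, converting unweighted to weighted seminorms through $d\sim h_{K'}$ on the out-elements of $\Delta_K$ to produce $h_{K'}^{\ell-\epsilon-n/2}\,|||u|||_{\ell,\epsilon,\Delta_K}$, and closing with a polynomial scaling contributing $h_K^{\alpha-m+n/2}$. Your packaging of the polynomial part (inverse estimate plus coefficient--$L^2$ equivalence on $K$, rather than the paper's direct bound of $\sum_i |u(x_i)|\,|\phi_i|_{m,\alpha}$ by scaling each basis function) and your uniform use of $d\sim h_K$ on the whole patch (rather than the paper's split of the sum over derivative orders $j$, using $\overline{d}_{K'}$ for negative and $d_{K'}$ for positive weight powers $j+\epsilon-\ell$) are cosmetic variants of the same scaling argument, so the two proofs coincide in substance.
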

\begin{proof}
If $K$ belongs to $\Ohout$,  the first inequality follows from standard finite element theory (see, for example, \cite{ciarlet78}). Let's consider then $K$ in $\Ohin$. If $K$ does not share at least one node with $\Ohout$, $\Pi_h u$ is identically zero on $K$, and the second inequality follows trivially. Let us consider then the case in which $K$ shares the node $x_i$ with $K' \subset \Ohout$, and assume that for each element $K'$ there exists an affine transformation such that $K' = F_{K'}(\hat K)$, and $\hat u := u \circ F_{K'}^{-1}$.

In this case we can write 
\begin{equation}
\label{eq:proof part 1}
  | \Pi_h u |_{m,\alpha, K} \leq \sum_{K' \text{ s.t. } x_i \in \overline{K'}} {|u(x_i)|} {|\phi_i|_{m,\alpha, K'}}
\end{equation}
%
We start by estimating  $|u(x_i)|$,
\begin{equation}
  \begin{split}
  & \|u\|_{\infty, 0, K'} = \|\hat u\|_{\infty, 0, \hat K} \lesssim \| \hat u \|_{\ell, 0, \hat K} \\
   & \lesssim \left(\sum_{j=0}^\ell h_{K'}^{2j -n} |u|^2_{j, 0, K'}\right)^{\frac12} \\
   & = \left(\sum_{j=0}^{\bar \ell-1} h_{K'}^{2j -n} |u|^2_{j, 0, K'} + \sum_{j=\bar \ell}^\ell h_{K'}^{2j -n} |u|^2_{j, 0, K'}\right)^{\frac12} \qquad \bar \ell \text{ s.t. } \bar \ell + \epsilon - \ell > 0\\
   & = \left( \sum_{j=0}^{\bar \ell-1}h_{K'}^{2j -n}  \bar d_{K'}^{\ \ -2(j+\epsilon-\ell)}  |u|^2_{j, j+\epsilon-\ell, K'} + \sum_{j=\bar \ell}^\ell h_{K'}^{2j -n} d_{K'}^{\ \ -2(j+\epsilon-\ell)} |u|^2_{j, j+\epsilon-\ell, K'}\right)^{\frac12} \\
 & \lesssim h_{K'}^{\ \ \ell - \epsilon -\frac{n}2} \left( \sum_{j=0}^{\ell} |u|^2_{j, j+\epsilon-\ell, K'} \right)^{\frac12} = h_{K'}^{\ \ \ell - \epsilon -\frac{n}2} ||| u |||_{\ell, \epsilon, K'},
\end{split}
\end{equation}
where we used i) standard scaling arguments for Sobolev norms, ii) the fact that since $K'$ is in $\Delta_K$, and therefore it is sufficiently close to $\Gamma$, we can write $\bar d_{K'} \lesssim h_{K'}$ and $h_{K'} \lesssim d_{K'}$, iii)  and the fact that for a negative power $q^-$, we have $|u|_{m, 0, K'} \leq \bar d^{\ \ -q^-}_{K'} |u|_{m, q^-, K'}$ while for a positive power $q^+$ we have $|u|_{m, 0, K'} \leq d^{\ \ -q^+}_{K'} |u|_{m, q^+, K'}$.

To estimate the second term in the right-hand side of equation~\eqref{eq:proof part 1}, ${|\phi_i|_{m,\alpha, K'}}$, we use again a scaling argument for Sobolev norms:
\begin{equation}
  \label{eq:proof part II}
  \begin{split}
    |\phi_i|_{m,\alpha, K'} & \lesssim h_{K'}^{\ \ -m+\frac{n}2} |\hat \phi_i|_{m,\alpha, \hat K} \\
    & \lesssim  h_{K'}^{\ \ \alpha -m+\frac{n}2} 
    {|\hat \phi_i|_{m, 0, \hat K}} \\
    & \lesssim  h_{K'}^{\ \ \alpha -m+\frac{n}2},
  \end{split}
\end{equation}
where to obtain the last inequality we have used that   ${|\hat \phi_i|_{m, 0, \hat K}} {\lesssim 1}$.

Combining \eqref{eq:proof part 1} and~\eqref{eq:proof part II}, and summing over all $K' \in \Delta_K$ we get the second inequality of the thesis.

\end{proof}

We are now in the position to prove our main result. 
\begin{theo} 
\label{theo:main-estimate}
Let $u$ be the exact solution to Problem~\ref{pb:weighted}, and let
$u_h$ be the solution to Problem~\ref{pb:discrete} in $W_h^\ell$ with
$\ell \geq 1$. In particular
$u \in H^2(\Omega\setminus\Gamma) \cap H^{\frac32-s}(\Omega) \cap
W_{\alpha}$, and
\begin{equation}
|u-u_h|_{m,\alpha,\Omega}\lesssim h^{\frac32-s+\alpha-m} |||u|||_{\frac32-s,
  0, \Omega}
\end{equation}
where $s \in \left(0,\frac12\right]$, and $\alpha \in \left[0,\frac12\right)$.
\end{theo}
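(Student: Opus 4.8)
The plan is to combine the quasi-optimality of the Petrov--Galerkin scheme with the interpolation estimates just established, localising the analysis to the tube of inner elements where the extra factor $d^\alpha$ produces the decisive gain. First I would invoke the stability result of Theorem~\ref{theo:stability}: the abstract error estimate~\eqref{eq:optimal_error_estimate} reduces the problem to bounding the best approximation error in the weighted norm, so that for $m=0,1$ it suffices to exhibit a single competitor $v_h\in W_h^\ell$ and estimate $\|u-v_h\|_{1,\alpha,\Omega}$. The natural choice is the interpolant $v_h=\Pi_h u$ defined in~\eqref{eq:definition pi h}, whose behaviour on the two cell families is recorded in Theorem~\ref{theo:properties of pi h}; by Lemma~\ref{lem:Wh-in-Walpha} this competitor indeed lives in $W_\alpha$.

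Next I would split the squared seminorm over $\Omega_h^{\text{out}}$ and $\Omega_h^{\text{in}}$ and treat the two families separately. On the outer cells the solution is as smooth as required, since $u\in H^2(\Omega\setminus\Gamma)$, so the first bound of Theorem~\ref{theo:properties of pi h} applies; I would convert this unweighted estimate into a weighted one using $d(x)\le\overline d_K$ on $K$ together with the comparability $\overline d_K\lesssim d_K$ valid on $\Omega_h^{\text{out}}$ from~\eqref{eq:mesh-properties}, and then repackage the distance weights into the Kondratiev norm $|||u|||$ via Lemma~\ref{lem:weighted inequality}. On the inner cells I would use the triangle inequality $|u-\Pi_h u|_{m,\alpha,K}\le|u|_{m,\alpha,K}+|\Pi_h u|_{m,\alpha,K}$: the interpolant term is controlled by the second bound of Theorem~\ref{theo:properties of pi h}, while the term in $u$ is estimated directly through Lemma~\ref{lem:weighted inequality}. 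Here the decisive observation is that on $\Omega_h^{\text{in}}$ one has $\overline d_K\simeq h_K\simeq h$, so every weight $d^\alpha$ contributes an extra factor $h^\alpha$, which is exactly the improvement over the unweighted rate of Theorem~\ref{theo:a-priori-standard}.

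I would then collect the powers of $h$. Choosing the effective regularity index to match the global regularity $\frac32-s$ of $u$ and taking $\epsilon=0$ (admissible since $\alpha\ge0$, so that the weight index in the target norm is indeed zero), the outer contribution scales at least as $h^{\frac32-s+\alpha-m}$, while the inner contribution \emph{saturates} this rate: it is the low global regularity across $\Gamma$ that caps the exponent at $\frac32-s-m$, and the tube weight $d^\alpha\simeq h^\alpha$ restores the additional $h^\alpha$. Summing over the inner tube is where the codimension-one structure enters, exactly as in the norm-equivalence lemma: the number of inner cells grows like $h^{-(n-1)}$ while each carries a volume factor $h^{n}$, and it is this balance --- the $O(\Delta)$ rather than $O(\Delta^2)$ behaviour noted there --- that keeps the rate intact and reconstructs $|||u|||_{\frac32-s,0,\Omega}$ on the right-hand side.

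The main obstacle I expect is the bookkeeping on the inner cells: one must check that the weighted seminorms $|u|_{m,\alpha,K}$ of the exact solution, which is only $H^{\frac32-s}$ across $\Gamma$, are genuinely controlled by $h^{\frac32-s+\alpha-m}|||u|||_{\frac32-s,0,\Delta_K}$ uniformly in $K$, and that summing these local bounds over the whole tube reproduces the global Kondratiev norm with a bounded finite-overlap multiplicity constant. Matching the fractional order $\frac32-s$ through interpolation estimates that are phrased for integer-type Kondratiev norms, and verifying that the chosen regularity index together with $\epsilon=0$ is compatible with the constraints $\alpha\ge\epsilon$ and $\ell\ge1$, is the delicate point; the remaining steps are standard scaling and summation arguments.
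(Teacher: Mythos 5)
Your proposal is correct and follows essentially the same route as the paper's own proof: quasi-optimality from Theorem~\ref{theo:stability}, the competitor $v_h=\Pi_h u$, the splitting into $\Omega_h^{\text{out}}$ and $\Omega_h^{\text{in}}$ with the triangle inequality on the inner cells, and the combination of the two bounds of Theorem~\ref{theo:properties of pi h} with Lemma~\ref{lem:weighted inequality} (with $\epsilon=0$ and the effective regularity index $\tfrac32-s$). The delicate points you flag --- the fractional order, the weight gain $\overline{d}_K^{\,\alpha}\simeq h^\alpha$ on the inner tube, and the finite-overlap summation reconstructing $|||u|||_{\frac32-s,0,\Omega}$ --- are exactly the steps the paper treats tersely, so your bookkeeping is, if anything, more explicit than the published argument.
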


\begin{proof}
Consider $K\in \Ohout$. Using Lemma~\ref{lem:weighted inequality} and property~\eqref{eq:property of Pi_h}, we easily obtain
\begin{equation}
\begin{split}
|u-\Pi_h u|_{m,\alpha,K} & \lesssim \overline{d}_K^\alpha |u-\Pi_h u|_{m,0,K} \\
& \lesssim \overline{d}_K^\alpha h_K^{2-m} |u|_{2,0,K} \\
& \lesssim  h_K^{2-m} |u|_{2,0,K}, 
\end{split}
\end{equation}
where we used that $\overline{d}_K^\alpha \leq |\Omega|^\alpha = C$ for $\alpha \geq 0$.

Similarly, for $K$ in $\Ohin$, we have
\begin{equation}
\label{eq:error-estimate-on-Omega-in}
|u-\Pi_h u|_{m,\alpha,K}  \leq { |u|_{m,\alpha,K} }+ {|\Pi_h u|_{m,\alpha,K}}
\end{equation}
where we can estimate the first term in the right hand side of equation~\eqref{eq:error-estimate-on-Omega-in} using lemma~\ref{lem:weighted inequality},
and the second term using the second property in
equation~\eqref{eq:property of Pi_h}. Theorem~\ref{theo:stability}
implies the thesis.
\end{proof}


\section{Numerical validation}
\label{sec:validation}

All numerical examples provided in this section were obtained using an
open source code based on the \texttt{deal.II}
library~\cite{Bangerth2007,Bangerth2016b,MaierBardelloniHeltai-2016-a,AlzettaArndtBangerth-2018-a}
and on the \texttt{deal2lkit} toolkit~\cite{Sartori2018}. 

We construct an artificial problem with a known exact solution, and check the error estimates presented in the previous section. We begin with a simple two-dimensional problem, where we impose the Dirichlet data in order to produce a harmonic solution in the domain $\Omega\setminus\Gamma$, with a jump in the normal gradient along a circular interface, and extend the same test case to the three-dimensional setting.

Recalling our main result (Theorem~\ref{theo:main-estimate}), we have that the exact solution of this problem is in $H^2(\Omega\setminus\Gamma)$ (it is in fact analytic everywhere except across $\Gamma$) and globally $H^{\frac32}(\Omega)$, therefore we expect a convergence rate in weighted Sobolev spaces of the type:

\begin{equation}
  \label{eq:expected-rate}
  |u- u_h|_{m,\alpha,\Omega}\lesssim h^{\frac32-s-m+\alpha} |||u|||_{\frac32, 0, \Omega}, \qquad \alpha \in \left[0,\tfrac12\right), \qquad m=0,1.
\end{equation}

Notice that when $\alpha \to 1/2$, then the estimate tends to the
optimal case, while when $\alpha=0$, the estimate is classical
(suboptimal due to the lack of global regularity in the
solution). These situations occur very often in numerical simulations
of boundary value problems with interfaces, and the results we present
here show that a proper variational formulation of the interface terms
results in suboptimal convergence property of the finite element
scheme. However such sub-optimality is a local property due to the
non-matching nature of the discretisation, and it only influences the
solution  \emph{close to the surface $\Gamma$}. If we take this into
account when measuring the error, for example using a weighted Sobolev
norm as we do in this work, we recover the optimal estimate.

The results we presented in Theorem~\ref{theo:main-estimate} can be
applied also in the case of higher order approximations. However, the
theory shows that there would be no improvement in the \emph{global}
convergence rate beyond $\frac32-s-m+\alpha$, making the use of
polynomial order greater than one redundant. This is not to say that
the method would not benefit from higher order polynomiasl \emph{away}
from the co-dimension one surface $\Gamma$, even though the global
approximation error would not converge with the expected higher order
rate.

\subsection{Two-dimensional case}
\label{sec:two-dimensional-case}

In the two-dimensional case, we define the exact solution to be
\begin{equation}
  \begin{aligned}
    c = (0.3,0.3) \\ 
    r := x-c \\
  \end{aligned}
  \qquad
  u = 
  \begin{cases}
    -\ln(|r|) & \text{ if } |r| > 0.2, \\
    -\ln(0.2) & \text{ if } |r| \leq 0.2.
  \end{cases}
\label{eq:exact solution 2d}
\end{equation}

\begin{figure}
  \label{fig:exact 2d}
  \centering
  \includegraphics[width=\textwidth]{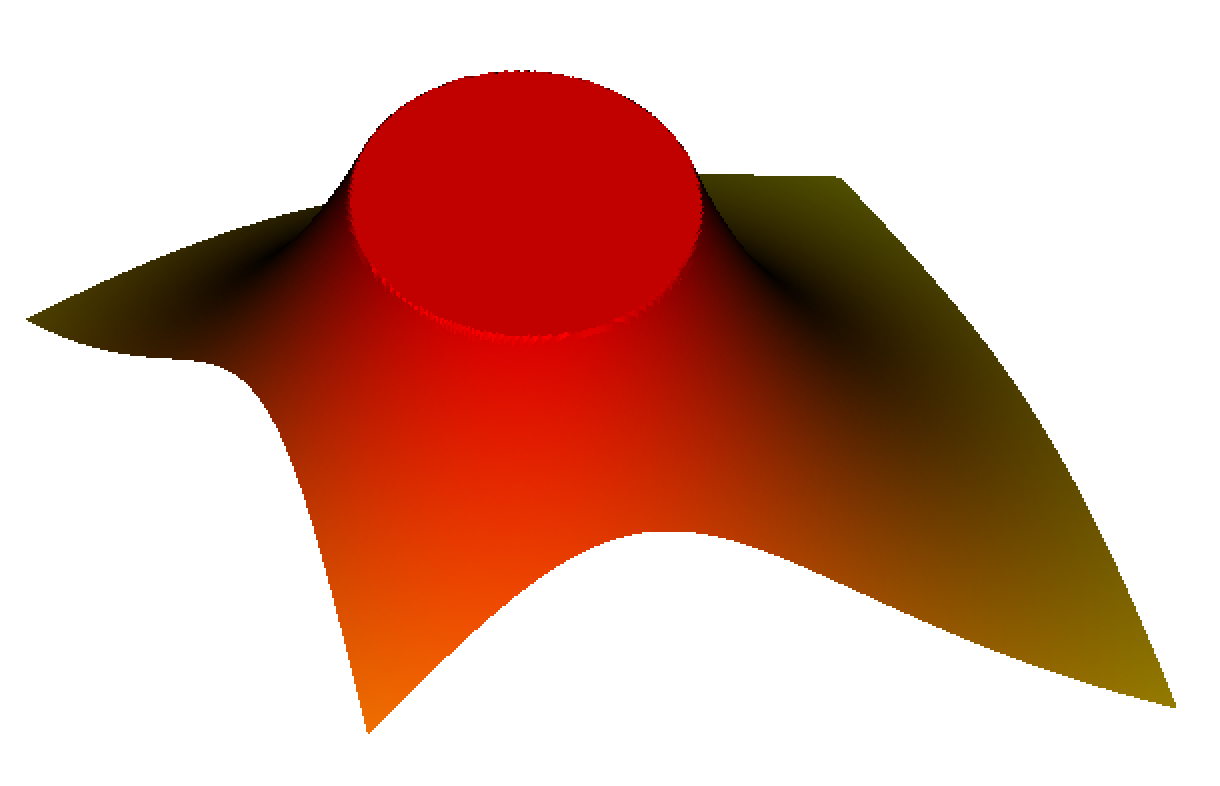}
  \caption{Elevation plot of the approximate solution to the two-dimensional model problem~\eqref{eq:example problem in 2d} in the most refined case.}
\end{figure}

The curve $\Gamma$ is a circle of radius $0.2$ with center in $c$. This function is the solution to the following problem:
\begin{equation}
\begin{aligned}
 -\Delta u &= 0 &&\text { in } \Omega\setminus\Gamma, \\
 u &= -\ln(|r|) &&\text{ on } \partial \Omega, \\
 \jump{\nu\cdot\nabla u} =f & = \frac{1}{0.2} \quad \left(=\frac{1}{|r|}=\nu\cdot\nabla u^+\right) &&\text{ on } \Gamma, \\
  \jump{ u }  & = 0 && \text{ on } \Gamma.
\end{aligned}\label{eq:example problem in 2d}
\end{equation}

We use a bi-linear finite dimensional space $W^1_h$, and show a plot of the numerical solution for $h=1/1024$ in Figure~\ref{fig:exact 2d}. We compute the error in the weighted Sobolev norms $\|\cdot\|_{0,\alpha,\Omega}$ and  $\|\cdot\|_{1,\alpha,\Omega}$ for values of $h$ varying from $1/4$ to $1/1024$, and  values of $\alpha$ varying from zero (standard Sobolev norms in $L^2(\Omega)$ and $H^1(\Omega)$) to $0.49$. 

\pgfplotstableread[comment chars={c}]{./data/error_2d.txt}\DataTwoD
\pgfplotstableread[comment chars={c}]{./data/error_3d.txt}\DataThreeD

  \begin{table}[!htb]
    \centering
    \resizebox{\textwidth}{!}{
      \TableLTwo{\DataTwoD}
    }
  \caption{Error in the weighted $L^2_\alpha$ norm $\|u-u_h\|_{0,\alpha,\Omega}$ for different values of $\alpha$ in the two-dimensional case.}
    \label{tab:convergence-rates-L2-2d}
  \end{table}

  \begin{figure}
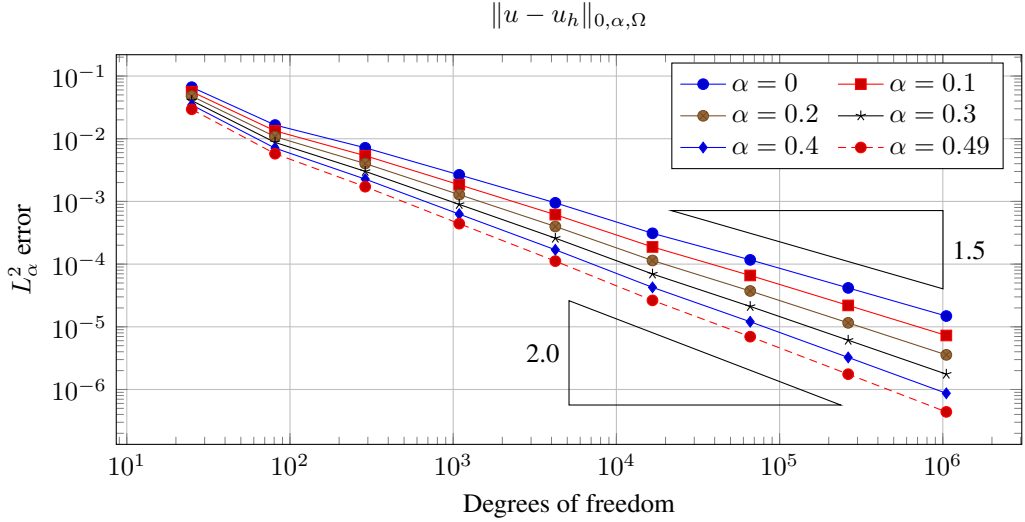

    \begin{center}
      \PlotLTwo{\DataTwoD}{}{
          \logLogSlopeTriangle{0.913}{0.3}{0.6}{.75}{black}{1.5};
          \logLogSlopeTriangleReversed{0.8}{0.3}{0.101}{1.0}{black}{2.0};
      }
    \end{center}
    \caption{Error in the weighted $L^2_\alpha$ norm $\|u-u_h\|_{0,\alpha,\Omega}$ for different values of $\alpha$ in the two-dimensional case. The black triangles show two representative rates of decrease of the error in terms of powers of the mesh size $h$.}
    \label{fig:error-L2-2d}
  \end{figure}

We report the errors in the weighted $L^2_\alpha(\Omega)$ norm in Table~\ref{tab:convergence-rates-L2-2d} and in Figure~\ref{fig:error-L2-2d}, and for the  weighted $H^1_\alpha(\Omega)$ norm in Table~\ref{tab:convergence-rates-H1-2d} and in Figure~\ref{fig:error-H1-2d}.

From the tables we verify the results of Theorem~\ref{theo:main-estimate}, and we observe rates of convergence in the standard $L^2(\Omega)$ and $H^1(\Omega)$ Sobolev norms which are coherent with the $H^{3/2}(\Omega)$ global regularity of the solution. In particular we expect a convergence rate of order $3/2$ for the $L^2(\Omega)$ norm and $1/2$ for the  $H^1(\Omega)$ norm. When increasing $\alpha$ to a value close to $1/2$, we observe that the errors in the weighed norms converge to the optimal rates (in this case two and one).

  \begin{table}[!htb]
    \centering
    \resizebox{\textwidth}{!}{
      \TableHOne{\DataTwoD}
    }
  \caption{Error in the weighted $H^1_\alpha$ norm $\|u-u_h\|_{1,\alpha,\Omega}$ for different values of $\alpha$ in the two-dimensional case.}
    \label{tab:convergence-rates-H1-2d}
  \end{table}

  \begin{figure}
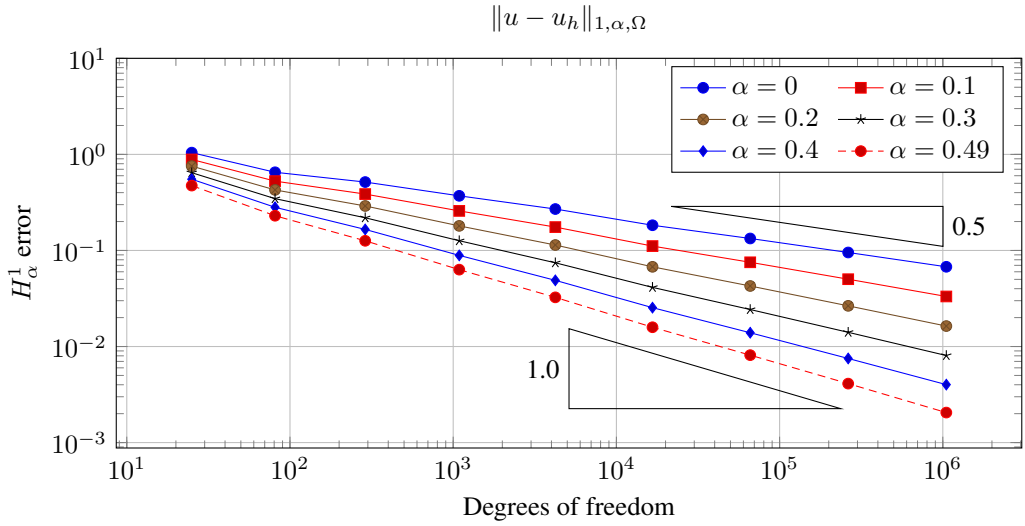

    \begin{center}
      \PlotHOne{\DataTwoD}{10}{        
          \logLogSlopeTriangle{0.913}{0.3}{0.62}{.25}{black}{0.5};
          \logLogSlopeTriangleReversed{0.8}{0.3}{0.101}{0.5}{black}{1.0};
        }
    \end{center}
    \caption{Error in the weighted $H^1_\alpha$ norm $\|u-u_h\|_{1,\alpha,\Omega}$ for different values of $\alpha$ in the two-dimensional case.  The black triangles show two representative rates of decrease of the error in terms of powers of the mesh size $h$.}
    \label{fig:error-H1-2d}
  \end{figure}

\subsection{Three-dimensional case}
\label{sec:three-dimens-case}

In the three-dimensional case, we define the exact solution to be
\begin{equation}
  \begin{aligned}
    c = (0.3,0.3,0.3) \\ 
    r := x-c \\
  \end{aligned}
  \qquad
  u = 
  \begin{cases}
    \frac{1}{|r|} & \text{ if } |r| > 0.2, \\
    \frac{1}{0.2} & \text{ if } |r| \leq 0.2.
  \end{cases}
\label{eq:exact solution 3d}
\end{equation}

\begin{figure}\centering

  \includegraphics[width=\textwidth]{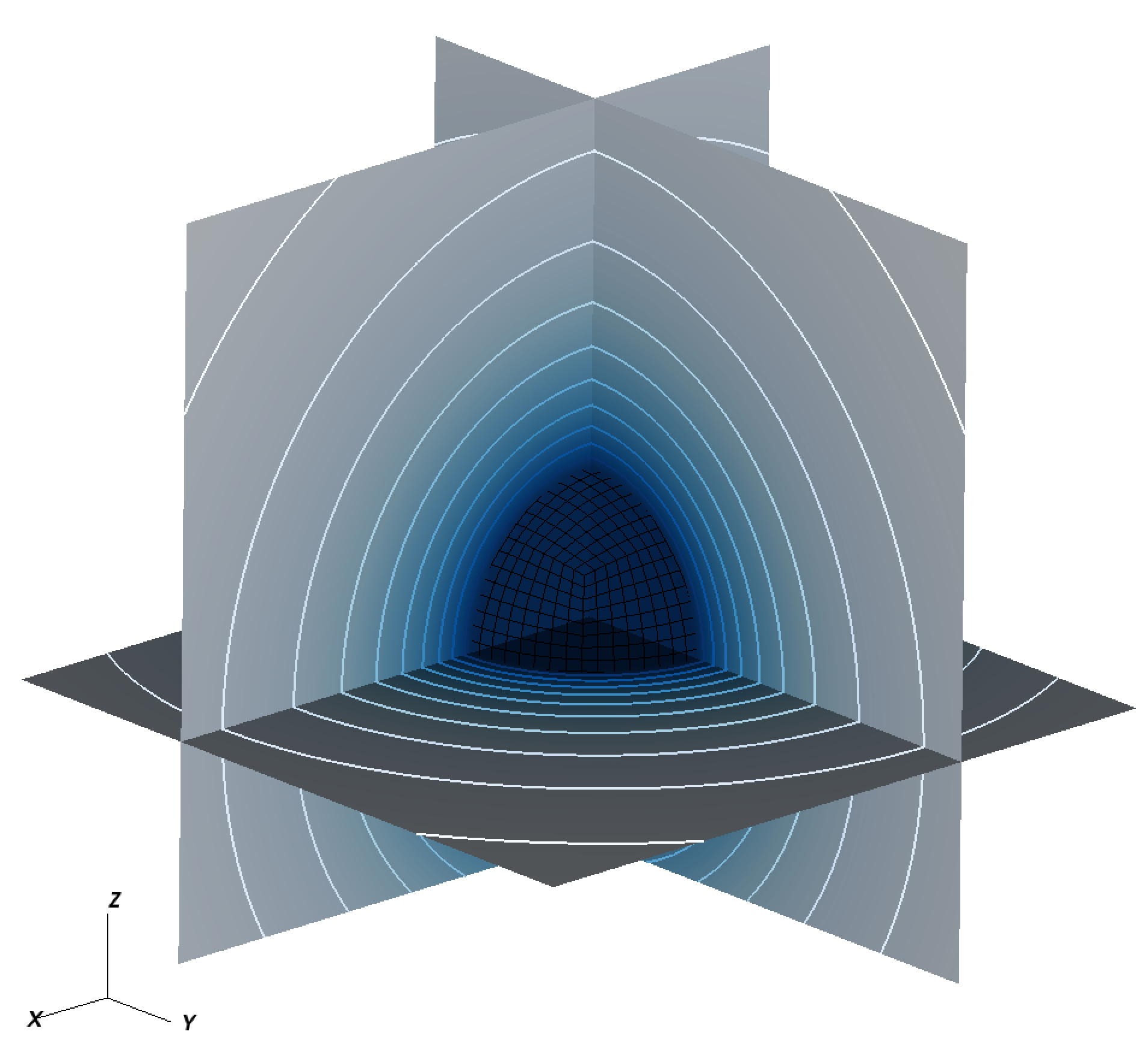}
  \caption{Sections and contour plots of the approximate solution to the three-dimensional model problem~\eqref{eq:example problem in 3d} in the most refined case.}
  \label{fig:exact 3d}
\end{figure}

The surface $\Gamma$ is a sphere of radius $0.2$ with center in $c$. This function is the solution to the following problem:
\begin{equation}
\begin{aligned}
 -\Delta u &= 0 &&\text { in } \Omega\setminus\Gamma,\\
 u &= \frac{1}{|r|} &&\text{ on } \partial \Omega,\\
 \jump{\nu\cdot\nabla u} =f & = \frac{1}{0.2^2} \quad \left(=\frac{1}{|r|^2}=\nu\cdot\nabla u^+\right) &&\text{ on } \Gamma,\\
  \jump{ u }  & = 0 && \text{ on } \Gamma.
\end{aligned}\label{eq:example problem in 3d}
\end{equation}

We use a bi-linear finite dimensional space $W^1_h$, and show a plot of the numerical solution for $h=1/128$ in Figure~\ref{fig:exact 3d}. We compute the error in the weighted Sobolev norms $\|\cdot\|_{0,\alpha,\Omega}$ and  $\|\cdot\|_{1,\alpha,\Omega}$ for values of $h$ varying from $1/4$ to $1/128$, and  values of $\alpha$ varying from zero (standard Sobolev norms in $L^2(\Omega)$ and $H^1(\Omega)$) to $0.49$. 

  \begin{table}[!htb]
    \centering
    \resizebox{\textwidth}{!}{
      \TableLTwo{\DataThreeD}
    }
  \caption{Error in the weighted $L^2_\alpha$ norm $\|u-u_h\|_{0,\alpha,\Omega}$ for different values of $\alpha$ in the three-dimensional case.}
    \label{tab:convergence-rates-L2-3d}
  \end{table}

  \begin{figure}
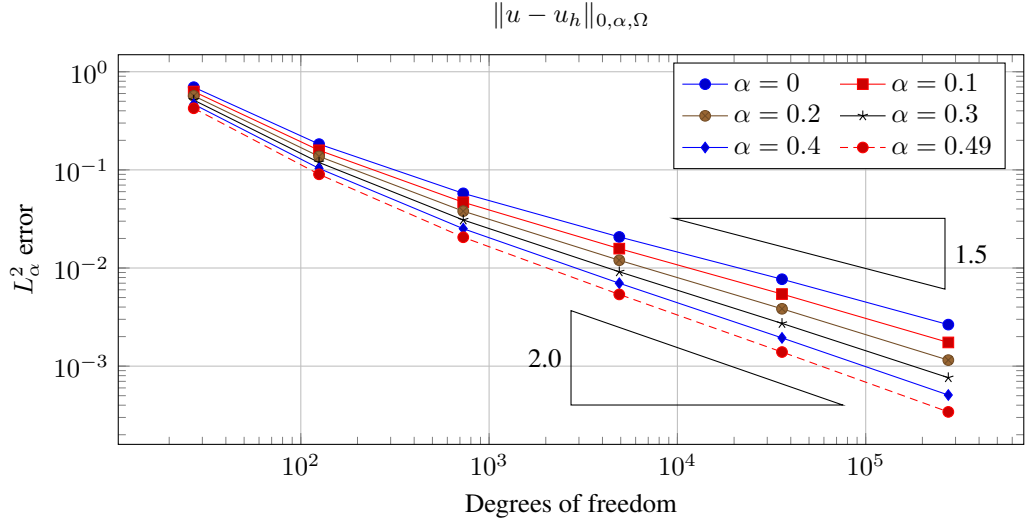

    \begin{center}
      \PlotLTwo{\DataThreeD}{}{   
          \logLogSlopeTriangle{0.913}{0.3}{0.58}{1.5/3}{black}{1.5};
          \logLogSlopeTriangleReversed{0.8}{0.3}{0.101}{2.0/3.0}{black}{2.0};
        }
    \end{center}
    \caption{Error in the weighted $L^2_\alpha$ norm $\|u-u_h\|_{0,\alpha,\Omega}$ for different values of $\alpha$ in the three-dimensional case.  The black triangles show two representative rates of decrease of the error in terms of powers of the mesh size $h$.}
    \label{fig:error-L2-3d}
  \end{figure}

We report the errors in the weighted $L^2_\alpha(\Omega)$ norm in Table~\ref{tab:convergence-rates-L2-3d} and in Figure~\ref{fig:error-L2-3d}, and for the  weighted $H^1_\alpha(\Omega)$ norm in Table~\ref{tab:convergence-rates-H1-3d} and in Figure~\ref{fig:error-H1-3d}.

From the tables we verify again the results of Theorem~\ref{theo:main-estimate} in the three dimensional case. We observe rates of convergence in the standard $L^2(\Omega)$ and $H^1(\Omega)$ Sobolev norms which are coherent with the $H^{3/2}(\Omega)$ global regularity of the solution. In particular we expect a convergence rate of order $3/2$ for the $L^2(\Omega)$ norm and $1/2$ for the  $H^1(\Omega)$ norm. When increasing $\alpha$ to a value close to $1/2$, we observe that the errors in the weighed norms converge to the optimal rates (in this case two and one).

  \begin{table}[!htb]
    \centering
    \resizebox{\textwidth}{!}{
      \TableHOne{\DataThreeD}
    }
  \caption{Error in the weighted $H^1_\alpha$ norm $\|u-u_h\|_{1,\alpha,\Omega}$ for different values of $\alpha$ in the three-dimensional case.}
    \label{tab:convergence-rates-H1-3d}
  \end{table}

  \begin{figure}
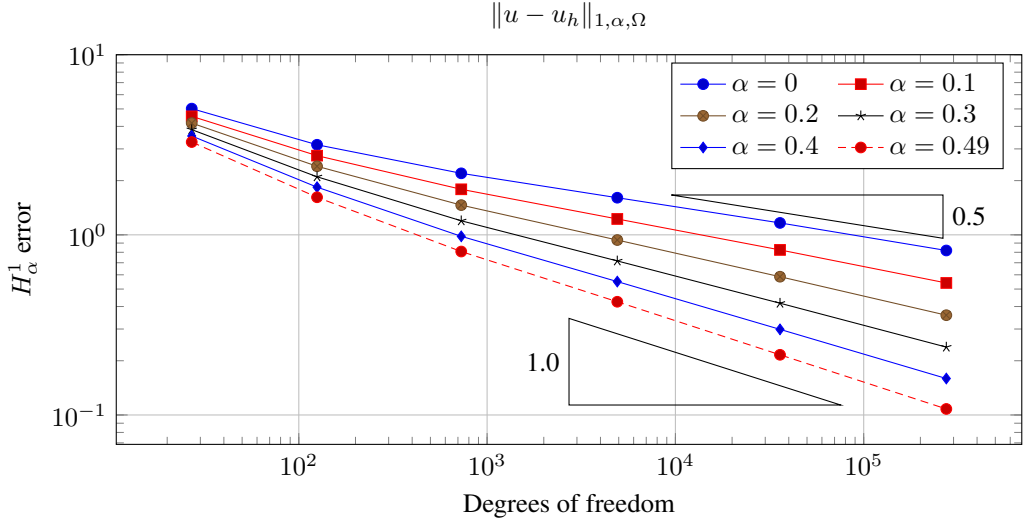

    \begin{center}
      \PlotHOne{\DataThreeD}{10}{
          \logLogSlopeTriangle{0.913}{0.3}{0.64}{.5/3.0}{black}{0.5};
          \logLogSlopeTriangleReversed{0.8}{0.3}{0.101}{1.0/3.0}{black}{1.0};
        }
    \end{center}
    \caption{Error in the weighted $H^1_\alpha$ norm $\|u-u_h\|_{1,\alpha,\Omega}$ for different values of $\alpha$ in the three-dimensional case.  The black triangles show two representative rates of decrease of the error in terms of powers of the mesh size $h$.}
    \label{fig:error-H1-3d}
  \end{figure}

\section{Conclusions}
\label{sec:conclusions}

One of the major point against the use of immersed boundary methods and their variational counterparts, comes from the unfavourable comparison in convergence rates that can be achieved using matching grid methods (ALE~\cite{Hirt1974,DoneaGiulianiHalleux-1982-a}), or enriching techniques (IIM~\cite{Leveque1994}, X-FEM~\cite{Mittal2005b}). 

In this work we have shown that this detrimental effect on the convergence properties is only a local phenomena, restricted to a small neighbourhood of the interface. In particular we have proved that optimal approximations can be constructed in a natural and inexpensive way, simply by reformulating the problem in a distributionally consistent way, and by resorting to weighted norms  when computing the global error of the approximation, where the weight is an appropriate power of the distance from the interface.
Weighted Sobolev spaces~\cite{Kufner1985,Turesson2000}, provide a natural framework for the study of the convergence properties of problems with singular sources~\cite{Agnelli2014} or problems with singularities in the domain~\cite{Belhachmi2006,Duran2009a}.

The method we have presented has the great advantage of not requiring
any change in the numerical approximation scheme, which is maintained
the same as if no interface were present, requiring only the
construction of a special right hand side, incorporating the jump
conditions in a simple singularity~\cite{Peskin2002,
  BoffiGastaldiHeltaiPeskin-2008-a}. The analysis is based on results
from~\cite{Drelichman2018}, while the numerical techniques borrow
heavily from the formalism and the results presented
in~\cite{Dangelo2012,DAngelo2008}.

Applications of this discretisation technique to fluid structure
interaction problems is well known and dates back to the early
seventies~\cite{Peskin1972}. Recent developments in finite element
variants are available, for example in~\cite{Heltai2012b, Roy2015},
and similar constructions are used to impose boundary conditions on
ocean circulation simulations~\cite{Rotundo2016}. Applications of this
technique could be used, for example, to allow more general classes of
doping profiles in doping optimization problems for semi-conductor
devices~\cite{PeschkaRotundoThomas2016,Peschka2018}.

It is still unclear if the same techniques may be used for the treatment of jumps in the solution itself, as in these cases the regularity gain that could be achieved by weighted Sobolev spaces alone may not be sufficient, and we are currently exploring alternative approximation frameworks, following the lines of~\cite{Nochetto2016}.

\section*{Acknowledgments}

N.R. acknowledges support by DFG via SFB 787 Semiconductor Nanophotonics: Materials, Models, Devices, project B4 ``Multi-dimensional Modeling and Simulation of electrically pumped semiconductor-based Emitters''.

\bibliographystyle{abbrv}
\bibliography{ifem-luca-nella}

\begin{thebibliography}{10}

\bibitem{Adams2003}
R.~A. Adams and J.~J.~F. Fournier.
\newblock {\em {Sobolev spaces}}, volume 140.
\newblock Academic press, 2003.

\bibitem{Agnelli2014}
J.~P. Agnelli, E.~M. Garau, and P.~Morin.
\newblock {A posteriori error estimates for elliptic problems with Dirac
  measure terms in weighted spaces}.
\newblock {\em ESAIM: Mathematical Modelling and Numerical Analysis},
  48(6):1557--1581, nov 2014.

\bibitem{AlzettaArndtBangerth-2018-a}
G.~Alzetta, D.~Arndt, W.~Bangerth, V.~Boddu, B.~Brands, D.~Davydov,
  R.~Gassm{\"{o}}ller, T.~Heister, L.~Heltai, K.~Kormann, M.~Kronbichler,
  M.~Maier, J.-P. Pelteret, B.~Turcksin, and D.~Wells.
\newblock {The deal.II Library, Version 9.0}.
\newblock {\em Journal of Numerical Mathematics}, 2018.

\bibitem{Bangerth2016b}
W.~Bangerth, D.~Davydov, T.~Heister, L.~Heltai, G.~Kanschat, M.~Kronbichler,
  M.~Maier, B.~Turcksin, and D.~Wells.
\newblock {The deal.II Library, Version 8.4}.
\newblock {\em Journal of Numerical Mathematics}, 24(3):1--8, jan 2016.

\bibitem{Bangerth2007}
W.~Bangerth, R.~Hartmann, and G.~Kanschat.
\newblock {deal.II---A general-purpose object-oriented finite element library}.
\newblock {\em ACM Transactions on Mathematical Software}, 33(4):24--es, aug
  2007.

\bibitem{Belhachmi2006}
Z.~Belhachmi, C.~Bernardi, and S.~Deparis.
\newblock {Weighted Cl{\'{e}}ment operator and application to the finite
  element discretization of the axisymmetric Stokes problem}.
\newblock {\em Numerische Mathematik}, 105(2):217--247, nov 2006.

\bibitem{BoffiGastaldi-2003-a}
D.~Boffi and L.~Gastaldi.
\newblock {A finite element approach for the immersed boundary method}.
\newblock {\em Computers {\&} Structures}, 81(8-11), 2003.

\bibitem{BoffiGastaldi-2007-Numerical-stability-0}
D.~Boffi, L.~Gastaldi, and L.~Heltai.
\newblock {Numerical stability of the finite element immersed boundary method}.
\newblock {\em Mathematical Models {\&} Methods In Applied Sciences},
  17(10):1479--1505, oct 2007.

\bibitem{BoffiGastaldiHeltaiPeskin-2008-a}
D.~Boffi, L.~Gastaldi, L.~Heltai, and C.~S. Peskin.
\newblock {On the hyper-elastic formulation of the immersed boundary method}.
\newblock {\em Computer Methods in Applied Mechanics and Engineering},
  197(25-28):2210--2231, apr 2008.

\bibitem{Cabre2015}
X.~Cabre and S.~Yannick.
\newblock {Nonlinear equations for fractional laplacians II: existence,
  uniqueness, and qualitative properties of solutions}.
\newblock {\em Transactions of the American Mathematical Society},
  367(2):911--941, 2015.

\bibitem{Caffarelli2007}
L.~Caffarelli and L.~Silvestre.
\newblock {An extension problem related to the fractional Laplacian}.
\newblock {\em Communications in partial differential equations},
  32(8):1245----1260, 2007.

\bibitem{ciarlet78}
P.~G. Ciarlet.
\newblock {\em {The Finite Element Method for Elliptic Problems}}.
\newblock Elsevier Science Ltd, 1978.

\bibitem{Dangelo2012}
C.~D'Angelo.
\newblock {Finite Element Approximation of Elliptic Problems with Dirac Measure
  Terms in Weighted Spaces: Applications to One- and Three-dimensional Coupled
  Problems}.
\newblock {\em SIAM Journal on Numerical Analysis}, 50(1):194--215, jan 2012.

\bibitem{DAngelo2008}
C.~D'Angelo and {Alfio Quarteroni}.
\newblock {On the coupling of 1D and 3D diffusion-reaction equations.
  Application to tissue perfusion problems}.
\newblock {\em Mathematical Models and Methods in Applied Sciences},
  18(8):1481--1504, 2008.

\bibitem{DoneaGiulianiHalleux-1982-a}
J.~Donea, S.~Giuliani, and J.~Halleux.
\newblock {An arbitrary lagrangian-eulerian finite element method for transient
  dynamic fluid-structure interactions}.
\newblock {\em Computer Methods in Applied Mechanics and Engineering},
  33(1-3):689--723, sep 1982.

\bibitem{Drelichman2018}
I.~Drelichman, R.~Dur{\'{a}}n, and I.~Ojea.
\newblock {A weighted setting for the numerical approximation of the Poisson
  problem with singular sources}.
\newblock 2:1--13, sep 2018.

\bibitem{Duran2009a}
R.~G. Dur{\'{a}}n and F.~{L{\'{o}}pez Garc{\'{i}}a}.
\newblock {Solutions of the divergence and Korn inequalities on domains with an
  external cusp}.
\newblock {\em Annales Academiae Scientiarum Fennicae Mathematica},
  35:421--438, aug 2010.

\bibitem{Fabes1982a}
E.~B. Fabes, C.~E. Kenig, and R.~P. Serapioni.
\newblock {The local regularity of solutions of degenerate elliptic equations}.
\newblock {\em Communications in Partial Differential Equations}, 7(1):77--116,
  1982.

\bibitem{Gong2008}
Y.~Gong, B.~Li, and Z.~Li.
\newblock {Immersed-Interface Finite-Element Methods for Elliptic Interface
  Problems with Nonhomogeneous Jump Conditions}.
\newblock {\em SIAM Journal on Numerical Analysis}, 46(1):472--495, jan 2008.

\bibitem{HansboHansbo-2002-a}
A.~Hansbo and P.~Hansbo.
\newblock {An unfitted finite element method, based on Nitsche's method, for
  elliptic interface problems}.
\newblock {\em Computer Methods in Applied Mechanics and Engineering},
  191(47-48):5537--5552, nov 2002.

\bibitem{HansboHansbo-2004-a}
A.~Hansbo and P.~Hansbo.
\newblock {A finite element method for the simulation of strong and weak
  discontinuities in solid mechanics}.
\newblock {\em Computer Methods in Applied Mechanics and Engineering},
  193(33-35):3523--3540, aug 2004.

\bibitem{Heltai-2008-a}
L.~Heltai.
\newblock {On the stability of the finite element immersed boundary method}.
\newblock {\em Computers {\&} Structures}, 86(7-8):598--617, apr 2008.

\bibitem{Heltai2012b}
L.~Heltai and F.~Costanzo.
\newblock {Variational implementation of immersed finite element methods}.
\newblock {\em Computer Methods in Applied Mechanics and Engineering},
  229-232(54/2011/M):110--127, jul 2012.

\bibitem{Hirt1974}
C.~Hirt, A.~Amsden, and J.~Cook.
\newblock {An arbitrary Lagrangian-Eulerian computing method for all flow
  speeds}.
\newblock {\em Journal of Computational Physics}, 14(3):227--253, mar 1974.

\bibitem{Hosseini2014}
B.~Hosseini, N.~Nigam, and J.~M. Stockie.
\newblock {On regularizations of the Dirac delta distribution}.
\newblock {\em Journal of Computational Physics}, 305:423--447, jan 2016.

\bibitem{Hou2013}
S.~Hou, P.~Song, L.~Wang, and H.~Zhao.
\newblock {A weak formulation for solving elliptic interface problems without
  body fitted grid}.
\newblock {\em Journal of Computational Physics}, 249:80--95, 2013.

\bibitem{HsiaoWendland-2008-a}
G.~C. Hsiao and W.~L. Wendland.
\newblock {\em {Boundary integral equations}}, volume 164 of {\em Applied
  Mathematical Sciences}.
\newblock Springer-Verlag, Berlin, 2008.

\bibitem{Kufner1985}
A.~Kufner.
\newblock {\em {Weighted sobolev spaces}}.
\newblock John Wiley {\&} Sons Incorporated, 1985.

\bibitem{Lai2000}
M.-C. Lai and C.~S. Peskin.
\newblock {An Immersed Boundary Method with Formal Second-Order Accuracy and
  Reduced Numerical Viscosity}.
\newblock {\em Journal of Computational Physics}, 160(2):705--719, may 2000.

\bibitem{Leveque1994}
R.~J.~. Leveque and Z.~Li.
\newblock {The Immersed Interface Method for Elliptic Equations with
  Discontinuous Coefficients and Singular Sources}.
\newblock {\em SIAM J. Numer. Anal.}, 31(4):1019--1044, 1994.

\bibitem{Li1998}
Z.~Li.
\newblock {The immersed interface method using a finite element formulation}.
\newblock {\em Elsevier Science}, 27:253--267, 1998.

\bibitem{Li2003a}
Z.~Li, T.~Lin, and X.~Wu.
\newblock {New Cartesian grid methods for interface problems using the finite
  element formulation}.
\newblock {\em Numerische Mathematik}, 96(1):61--98, 2003.

\bibitem{MaierBardelloniHeltai-2016-a}
M.~Maier, M.~Bardelloni, and L.~Heltai.
\newblock {LinearOperator—A generic, high-level expression syntax for linear
  algebra}.
\newblock {\em Computers {\&} Mathematics with Applications}, 72(1):1--24, jul
  2016.

\bibitem{mclean2000strongly}
W.~C.~H. McLean.
\newblock {\em {Strongly elliptic systems and boundary integral equations}}.
\newblock Cambridge university press, 2000.

\bibitem{Melenk1996}
J.~Melenk and I.~Babu{\v{s}}ka.
\newblock {The partition of unity finite element method: Basic theory and
  applications}.
\newblock {\em Computer Methods in Applied Mechanics and Engineering},
  139(1-4):289--314, 1996.

\bibitem{Mittal2005b}
R.~Mittal and G.~Iaccarino.
\newblock {Immersed boundary methods}.
\newblock {\em Annual Review of Fluid Mechanics}, 37(1):239--261, jan 2005.

\bibitem{Mu2013}
L.~Mu, J.~Wang, G.~Wei, X.~Ye, and S.~Zhao.
\newblock {Weak Galerkin methods for second order elliptic interface problems}.
\newblock {\em Journal of Computational Physics}, 250:106--125, 2013.

\bibitem{Muckenhoupt1972}
B.~Muckenhoupt.
\newblock {Weighted Norm Inequalities for the Hardy Maximal Function}.
\newblock {\em Transactions of the American Mathematical Society1},
  165:207--226, 1972.

\bibitem{Nochetto2016}
R.~H. Nochetto, E.~Ot{\'{a}}rola, and A.~J. Salgado.
\newblock {Piecewise polynomial interpolation in Muckenhoupt weighted Sobolev
  spaces and applications}.
\newblock {\em Numerische Mathematik}, 132(1):85--130, 2016.

\bibitem{Otarola2018}
E.~Ot{\'{a}}rola and A.~J. Salgado.
\newblock {The Poisson and Stokes problems on weighted spaces in Lipschitz
  domains and under singular forcing}.
\newblock {\em Journal of Mathematical Analysis and Applications},
  471(1-2):599--612, mar 2019.

\bibitem{PeschkaRotundoThomas2016}
D.~Peschka, N.~Rotundo, and M.~Thomas.
\newblock Towards doping optimization of semiconductor lasers.
\newblock {\em Journal of Computational and Theoretical Transport},
  45(5):410--423, 2016.

\bibitem{Peschka2018}
D.~Peschka, N.~Rotundo, and M.~Thomas.
\newblock Doping optimization for optoelectronic devices.
\newblock {\em Optical and Quantum Electronics}, 50(3):125, Feb 2018.

\bibitem{Peskin1972}
C.~S. Peskin.
\newblock {Flow patterns around heart valves: A numerical method}.
\newblock {\em Journal of Computational Physics}, 10(2):252--271, oct 1972.

\bibitem{Peskin2002}
C.~S. Peskin.
\newblock {The immersed boundary method}.
\newblock {\em Acta Numerica}, 11(1):479--517, jan 2002.

\bibitem{Ramiere2008}
I.~Rami{\`{e}}re.
\newblock {Convergence analysis of the Q 1 -finite element method for elliptic
  problems with non-boundary-fitted meshes}.
\newblock {\em International Journal for Numerical Methods in Engineering},
  75(9):1007--1052, aug 2008.

\bibitem{Rotundo2016}
N.~Rotundo, T.-Y. Kim, W.~Jiang, L.~Heltai, and E.~Fried.
\newblock {Error Analysis of a B-Spline Based Finite-Element Method for
  Modeling Wind-Driven Ocean Circulation}.
\newblock {\em Journal of Scientific Computing}, apr 2016.

\bibitem{Roy2015}
S.~Roy, L.~Heltai, and F.~Costanzo.
\newblock {Benchmarking the immersed finite element method for
  fluid–structure interaction problems}.
\newblock {\em Computers {\&} Mathematics with Applications},
  69(10):1167--1188, 2015.

\bibitem{Sartori2018}
A.~Sartori, N.~Giuliani, M.~Bardelloni, and L.~Heltai.
\newblock {deal2lkit: A toolkit library for high performance programming in
  deal.II}.
\newblock {\em SoftwareX}, 7:318--327, 2018.

\bibitem{Sukumar2000}
N.~Sukumar, N.~Mo{\"{e}}s, B.~Moran, and T.~Belytschko.
\newblock {Extended finite element method for three-dimensional crack
  modelling}.
\newblock {\em International Journal for Numerical Methods in Engineering},
  48(11):1549--1570, aug 2000.

\bibitem{Turesson2000}
B.~O. Turesson.
\newblock {\em {Nonlinear Potential Theory and Weighted Sobolev Spaces}}.
\newblock Lecture Notes in Mathematics 1736. Springer-Verlag Berlin Heidelberg,
  1 edition, 2000.

\bibitem{Vaughan2006}
B.~Vaughan, B.~Smith, and D.~Chopp.
\newblock {A comparison of the extended finite element method with the immersed
  interface method for elliptic equations with discontinuous coefficients and
  singular sources}.
\newblock {\em Communications in Applied Mathematics and Computational
  Science}, 1(1):207--228, dec 2006.

\bibitem{ZhangGerstenberger-2004-Immersed-finite-0}
L.~T. Zhang, A.~Gerstenberger, X.~Wang, and W.~K. Liu.
\newblock {Immersed finite element method}.
\newblock {\em Computer Methods In Applied Mechanics and Engineering},
  193(21-22):2051--2067, may 2004.

\end{thebibliography}


\end{document}